\renewcommand{\Re}{\mathop{\rm Re}\nolimits}
\theoremstyle{plain}
\newtheorem{theorem}{Theorem}[section]
\newtheorem{lemma}[theorem]{Lemma}
\newtheorem{proposition}[theorem]{Proposition}
\newtheorem{corollary}[theorem]{Corollary}
\theoremstyle{definition}
\theoremstyle{remark}
\newtheorem{remark}[theorem]{Remark}
\newtheorem{assumption}[theorem]{Assumption}
\newcommand{\R}{{\mathbb R}}
\newcommand{\Z}{{\mathbb Z}}
\newcommand{\N}{{\mathbb N}}
\def\im{{\rm i}}
\newcommand{\C}{\mathbb{C}}
\def\({\left(}
\def\){\right)}
\def\<{\left\langle}
\def\>{\right\rangle}
\numberwithin{equation}{section}
\begin{document}

\title{A note on small data soliton selection for nonlinear Schr\"odinger equations with potential}

\author{Scipio Cuccagna, Masaya Maeda}
\maketitle

\begin{abstract}
In this note, we give an alternative proof of the theorem on soliton selection for small energy solutions of nonlinear Schr\"odinger equations (NLS) studied in \cite{CM15APDE,CM20}.
As in \cite{CM20}  we use the notion of  Refined Profile but unlike in  \cite{CM20} we do not modify the modulation coordinates and we do not search for Darboux coordinates.
\end{abstract}

%\tableofcontents
%\newpage
\section{Introduction}

In this note we give an alternative and, in some respect, simplified proof of the selection of small energy standing waves
for the  nonlinear Schr\"odinger equation (NLS)
\begin{equation}\label{nls}
\im \partial_t u = Hu + g(|u|^2)u,\quad (t,x)\in \R^{1+3},
\end{equation}
where $H:=-\Delta +V$ is  a  Schr\"odinger operator with $V\in \mathcal S(\R^3,\R)$ (Schwartz function) and $g \in C^\infty (\R,\R)$ satisfies $g(0)=0$ and the growth condition:
\begin{align}\label{eq:ggrowth}
%\exists p\in \(0,2\)\ \text{s.t.}\
\forall n\in\N\cup\{0\},\ \exists C_n>0,\ |g^{(n)}(s)|\leq  C_n \<s\>^{2-n}\text{ where } \<s\>:=(1+|s|^2)^{1/2}.
%p=2\ IS\ OK??
\end{align}
%where $p^*=\infty$ for $d=2$ and $p^*=2/(d-2)$ for $d\geq 3$.
%Here, $a\lesssim_nb$ means that there exists a constant $C=C_n$ such that $a \leq C b$.
%Further, by the transformation $u\mapsto e^{-\im g(0) t} u$, we can assume $g(0)=0$ without loss of generality.
We consider the Cauchy problem of NLS \eqref{nls} with the initial condition $u(0)=u_0 \in H^1(\R^3,\C)$.
It is well known that the NLS \eqref{nls} is locally well-posed (LWP) in $H^1:=H^1(\R ^3, \C )$, see e.g.\  \cite{CazSemi,LPBook%,SulemSulem,TaoBook
}. It is easy also to conclude, by mass and energy conservation, that  for small initial data $u_0\in H^1$ the corresponding solution is globally defined.

The aim of this paper is to revisit the study of asymptotic behavior of small (in $H^1$) solutions when the Schr\"odinger operator $H$ have several simple eigenvalues.
In such situation, it have been proved that solutions decouple into a soliton and dispersive wave \cite{SW04RMP,TY02ATMP,CM15APDE}. More recently
in \cite{CM20} we have introduced the notion of Refined Profile, which simplifies significantly the proof of the result in \cite{CM15APDE}.
In this note we exploit the notion of Refined Profile of   \cite{CM20}, but we give an alternative proof of the result in  \cite{CM20} which does not exploit directly the hamiltonian structure of the NLS. In this sense, in this paper we are closer in spirit to  Soffer and Weinstein \cite{SW04RMP}  and Tsai and Yau \cite{TY02ATMP}, but our proof is at the same time simpler and with stronger results.

To state our main result precisely, we introduce some notation  and several assumptions.
The following two assumptions for the Schr\"odinger operator $H$ hold
  for generic $V$.

\begin{assumption}\label{ass:nonres}
$0$ is neither an eigenvalue nor a resonance of $H$.
\end{assumption}

\begin{assumption}\label{ass:linearInd}
There exists $N\geq 2$ s.t.\
$$\sigma_d(H)=\{\omega_j\ |\ j=1,\cdots,N\},\text{ with }\omega_1<\cdots<\omega_N<0,$$
where $\sigma_d(H)$ is the set of discrete spectrum of $H$.
Moreover, we assume all $\omega_j$ are simple and
\begin{align}\label{eq:linind}
\forall \mathbf{m} \in \Z^N\setminus\{0\},\ \mathbf{m} \cdot  \boldsymbol{\omega} \neq 0,
\end{align}
where $\boldsymbol{\omega}:=(\omega_1,\cdots, \omega_N)$.
We set $\phi_j$ to be the eigenfunction of $H$ associated to the eigenvalue $\omega_j$ satisfying $\|\phi_j\|_{L^2}=1$.
We also set $\boldsymbol{\phi}=(\phi_1,\cdots,\phi_N)$.
\end{assumption}

\begin{remark}
	The cases $N=0,1$ are easier and are not treated it in this paper.
	Unfortunately, Assumption \eqref{ass:linearInd} excludes      radial potentials $V(r)$, for $r=|x|$, where in general we should expect
eigenvalues with   multiplicity higher than one.

\end{remark}

As it is well known, $\phi_j$'s are smooth and decays exponentially.
For $s\geq 0, \gamma\geq 0$, we set
\begin{align*}
H^s_\gamma:=\{u\in H^s\ |\ \|u\|_{H^s_\gamma}:=\|\cosh(\gamma x)u\|_{H^s}<\infty\}.
\end{align*}
The following is well known.
\begin{proposition}\label{prop:gam}
There exists $\gamma_0>0$ s.t.\ for all $1\leq j\leq N$, we have $\phi_j\in \cap_{s\geq 0}H^s_{\gamma_0}$.
\end{proposition}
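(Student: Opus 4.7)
The proof has two stages: first I would establish exponential $L^2$ decay of each $\phi_j$ by an Agmon identity, then upgrade to $H^s$ for every $s\geq 0$ via a weighted elliptic bootstrap. Since $\omega_1<\cdots<\omega_N<0$ by Assumption \ref{ass:linearInd}, the quantity $\min_j\sqrt{-\omega_j}=\sqrt{-\omega_N}$ is strictly positive, so I would fix once and for all $\gamma_0\in(0,\sqrt{-\omega_N})$ and use the same $\gamma_0$ for every $j$.

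For Stage 1, I run the classical Agmon argument. Choose a smooth, bounded, nonnegative approximation $F_R$ of $\gamma_0|x|$ with $|\nabla F_R|\leq \gamma_0$ everywhere and $F_R\nearrow\gamma_0|x|$ as $R\to\infty$ (e.g.\ $F_R(x)=\gamma_0 R\tanh(|x|/R)$ after smoothing near the origin). Multiplying the eigenvalue equation $(H-\omega_j)\phi_j=0$ by $e^{2F_R}\overline{\phi_j}$ and integrating by parts yields the identity
\begin{equation*}
\int_{\R^3}|\nabla(e^{F_R}\phi_j)|^2\,dx+\int_{\R^3}\bigl(V-\omega_j-|\nabla F_R|^2\bigr)e^{2F_R}|\phi_j|^2\,dx=0.
\end{equation*}
Because $V\in\mathcal S(\R^3)$, I can pick $R_0$ so large that $|V(x)|<(-\omega_j-\gamma_0^2)/2$ on $|x|\geq R_0$. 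Dropping the nonnegative gradient term and splitting the second integral at $\{|x|=R_0\}$ produces a uniform (in $R$) bound on $\int_{|x|\geq R_0}e^{2F_R}|\phi_j|^2\,dx$, since on the ball $B_{R_0}$ the weight $e^{2F_R}$ is dominated by a constant independent of $R$. Letting $R\to\infty$ and invoking monotone convergence delivers $e^{\gamma_0|x|}\phi_j\in L^2$, equivalently $\phi_j\in H^0_{\gamma_0}$.

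For Stage 2, set $w(x):=\cosh(\gamma_0 x)$ and $\psi:=w\phi_j$. A direct conjugation of the eigenvalue equation by $w$ produces the elliptic PDE
\begin{equation*}
-\Delta\psi+a(x)\cdot\nabla\psi+b(x)\psi=0,
\end{equation*}
where $a$ and $b$ are polynomial expressions in $w^{-1}\nabla w$, $w^{-1}\Delta w$, $w^{-2}|\nabla w|^2$, $V$ and $\omega_j$. The crucial observation is that $w^{-1}\partial^\alpha w$ is a smooth bounded function for every multi-index $\alpha$, because it reduces to a rational expression in $\tanh$ and $\sinh/\cosh$ factors; together with $V\in\mathcal S(\R^3)$, this gives $a,b\in C^\infty_b(\R^3)$ with all derivatives also bounded. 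Given $\psi\in L^2$ from Stage 1, a standard elliptic bootstrap (rewrite as $(1-\Delta)\psi=\psi-a\cdot\nabla\psi-b\psi$ and iterate $(1-\Delta)^{-1}\colon H^s\to H^{s+2}$) raises the regularity of $\psi$ two Sobolev indices at a time, yielding $\psi\in H^s$ for every $s\geq 0$, which is the desired conclusion.

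The only mildly delicate step is the rigorous justification of the Agmon identity when the exponential weight is unbounded, which is precisely why I regularize with $F_R$ and pass to the limit at the end; everything else, including the regularity bootstrap, is routine. If the notation $\cosh(\gamma x)$ is instead interpreted componentwise as $\prod_{i=1}^3\cosh(\gamma x_i)$, the same argument applies verbatim, since this product enjoys the same structural properties of $w^{-1}\partial^\alpha w$ being in $C^\infty_b$.
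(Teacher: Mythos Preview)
The paper does not actually prove this proposition: it is stated with the preamble ``The following is well known'' and no argument is given. Your proposal supplies exactly the standard proof one would expect---Agmon's exponential decay estimate followed by a weighted elliptic bootstrap---and it is correct.

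One small imprecision: in Stage 2 your bootstrap gains only one Sobolev index per iteration, not two, because the first-order term $a\cdot\nabla\psi$ lies in $H^{s-1}$ when $\psi\in H^s$, so $(1-\Delta)^{-1}$ maps the right-hand side from $H^{s-1}$ to $H^{s+1}$. This does not affect the conclusion. Alternatively, you could first invoke ordinary elliptic regularity to get $\phi_j\in\bigcap_s H^s$ (since $V\in\mathcal S$), and then run the Agmon identity with $e^{2F_R}\overline{\partial^\alpha\phi_j}$ against the differentiated equation, which avoids conjugation entirely.
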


%The proof of Proposition \ref{prop:gam} is given in the appendix of this paper.

Using $\gamma_0>0$, we set
\begin{align*}
\Sigma^s:= H^s_{\gamma_0} \ \text{if }s\geq 0,\  \Sigma^{s}:=(H^{-s}_{\gamma_0})^* \ \text{ if } s<0, \ \Sigma^{0-}:=(\Sigma^0)^*\text{ and }\Sigma^\infty:=\cap_{s\geq 0}\Sigma^s.
\end{align*}
We will not consider any topology in $\Sigma^\infty$ and we will only consider it as a set.

In order to introduce the   notion of  refined profile, we need the following
  combinatorial set up, which is exactly the same as in \cite{CM20}.

\noindent We start the following standard basis of $\R ^N$, which we view as ``non--resonant" indices:
\begin{align}\label{eq:defnr0}
\mathbf{NR}_0:=\{\mathbf e_j \ |\ j=1,\cdots,N\},\ \mathbf e_j:=(\delta_{1j},\cdots, \delta_{Nj})\in \Z^N, \text{ $\delta_{ij}$   the Kronecker delta.}
\end{align}
More generally, the
  sets of resonant and non--resonant indices $\mathbf{R}$, $\mathbf{NR}$, are
\begin{align}\label{eq:defrnr}
\mathbf{R}:=\{\mathbf{m}\in \Z^N\ |\ \sum\mathbf{m}=1,\ \boldsymbol{\omega} \cdot \mathbf{m}>0\},\quad
\mathbf{NR}:=\{\mathbf{m}\in \Z^N\ |\ \sum\mathbf{m}=1,\ \boldsymbol{\omega} \cdot \mathbf{m}<0\},
\end{align}
where $\sum \mathbf{m}:=\sum_{j=1}^N m_j$ for $\mathbf{m}=(m_1,\cdots,m_N) \in \Z^N$.

\noindent From Assumption \ref{ass:linearInd}  it is clear that $\{\mathbf{m}\in \Z^N\ |\ \sum \mathbf{m}=1\}=\mathbf{R}\cup \mathbf{NR}$ and $\mathbf{NR}_0\subset \mathbf{NR}$.
For $\mathbf{m} =(m_1,\cdots, m_N)\in \Z^N$, we define
\begin{align}\label{eq:absmdef}
|\mathbf{m}|:=(|m_1|,\cdots,|m_N|)\in \Z^N,\ \|\mathbf{m}\|:=\sum |\mathbf{m}|=\sum_{j=1}^N |m_j|,
\end{align}
and introduce partial orders $\preceq$ and $\prec$ by
\begin{align}
\mathbf{m}\preceq \mathbf{n}\ \Leftrightarrow_{\mathrm{def}} \forall j\in \{1,\cdots,N\},\ m_j\leq n_j, \label{eq:defpreceq}\quad \text{and}\quad
\mathbf{m}\prec \mathbf{n} \ \Leftrightarrow_{\mathrm{def}} \mathbf{m}\preceq \mathbf{n}\ \text{ and } \mathbf{m}\neq \mathbf{n},%\label{eq:defprec}
\end{align}
where $\mathbf{n}=(n_1,\cdots,n_N)$.
We define the minimal resonant indices by
\begin{align}\label{eq:defRmin}
\mathbf{R}_{\mathrm{min}}:=\{\mathbf{m} \in \mathbf{R}\ |\ \not\exists \mathbf{n}\in \mathbf{R}\ \mathrm{s.t.}\ |\mathbf{n}|\prec |\mathbf{m}|\}.
\end{align}
We also consider $\mathbf{NR}_1$  formed by the nonresonant indices not larger than   resonant indices:
\begin{align}\label{eq:defnr1}
\mathbf{NR}_1:=\{\mathbf{m}\in \mathbf{NR}\ |\ \forall \mathbf{n}\in \mathbf{R}_{\mathrm{min}},\ |\mathbf{n}| \not \prec |\mathbf{m}|\}.
\end{align}
Both $\mathbf{R}_{\mathrm{min}}$ and $\mathbf{NR}_1$ are finite sets, see \cite{CM20}  for the elementary proof.

We now introduce the  functions $\{G_\mathbf{m}\}_{\mathbf{m} \in \mathbf{R}_{\mathrm{min} }}\subset \Sigma^\infty$ which are crucial  in our analysis.
For $\mathbf{m}\in \mathbf{NR}_1$, we inductively define $\widetilde{\phi}_{\mathbf{m}}(0)$ and $g_{\mathbf{m}}(0)$ by
\begin{align}\label{eq:indefphigroot}
\widetilde{\phi}_{\mathbf{e}_j}(0):=\phi_j,\ g_{\mathbf{e}_j}(0)=0,\ j=1,\cdots,N,
\end{align}
and, for $\mathbf{m}\in \mathbf{NR}_1\setminus \mathbf{NR}_0$,  by
\begin{align}
\widetilde{\phi}_{\mathbf{m}}(0)&:=-(H-\mathbf{m} \cdot \boldsymbol{\omega})^{-1} g_{\mathbf{m}}(0),\label{eq:indefphi}\\
g_{\mathbf{m}}(0)&:=\sum_{m=1}^\infty \frac{1}{m!}g^{(m)}(0)\sum_{(\mathbf{m}_1,\cdots,\mathbf{m}_{2m+1})\in A(m,\mathbf{m})}\widetilde\phi_{\mathbf{m}_1}(0)\cdots \widetilde{\phi}_{\mathbf{m}_{2m+1}}(0),\label{eq:indefg}
\end{align}
where
\begin{align}\label{eq:defAmm}
A(m,\mathbf{m}):=\left\{ \{\mathbf{m}_j\}_{j=1}^{2m+1}\in (\mathbf{NR}_1)^{2m+1}\ |\ \sum_{j=0}^m \mathbf{m}_{2j+1}-\sum_{j=1}^m \mathbf{m}_{2j}= \mathbf{m},\ \sum_{j=0}^{2m+1} |\mathbf{m}_{j}|=|\mathbf{m}|\right\} %\\
% \label{eq:defBmm}
%&B(m,\mathbf{m}):=\{  \{\mathbf{m}_j\}_{j=1}^{2m+1}\in (\mathbf{NR}_1)^{2m+1}\ |\ \sum_{j=0}^m \mathbf{m}_{2j+1}-\sum_{j=1}^m \mathbf{m}_{2j}= \mathbf{m},\ \mathbf{m}_{2j-1}\neq \mathbf{m}_{2j} \}.
\end{align}

\begin{remark}
	For each $m\geq 1$ and $\mathbf{m} \in \mathbf{NR}_1$, $A(m,\mathbf{m})$ is a finite set.
	Furthermore, for sufficiently large $m$, we have $A(m,\mathbf{m})=\emptyset$.
	Thus, even though we are expressing $g_\mathbf{m}(0)$ in \eqref{eq:indefg} by a series, the sum is finite.
\end{remark}

For $\mathbf{m}\in \mathbf{R}_{\mathrm{min}}$, we define $G_{\mathbf{m}}$ by
\begin{align}\label{eq:defG}
G_{\mathbf{m}}:=\sum_{m=1}^\infty \frac{1}{m!}g^{(m)}(0)\sum_{(\mathbf{m}_1,\cdots,\mathbf{m}_{2m+1})\in A(m,\mathbf{m})}\widetilde\phi_{\mathbf{m}_1}(0)\cdots \widetilde{\phi}_{\mathbf{m}_{2m+1}}(0).
\end{align}
\begin{remark}
	$g_{\mathbf{m}}(0)$ and $G_\mathbf{m}$ are defined similarly. We are using a different notation to emphasize that $g_{\mathbf{m}}(0)$ has
	$\mathbf{m}\in \mathbf{NR}_1$, while $G_{\mathbf{m}}$ has  $\mathbf{m}\in \mathbf{R}_{\mathrm{min}} $.
\end{remark}

The following is the nonlinear Fermi Golden Rule (FGR) assumption essential in our analysis.
\begin{assumption}\label{ass:FGR}
	For all $\mathbf{m} \in \mathbf{R}_{\mathrm{min}}$, we assume
	\begin{align}\label{eq:FGR}
	\int_{|k|^2=\mathbf{m}\cdot \boldsymbol{\omega}}|  \widehat{G}_{\mathbf{m}}(k)|^2\,dS \neq 0,
	\end{align}
	where $\widehat{G}_{\mathbf{m}}$ is the distorted Fourier transform associated to $H$.
\end{assumption}

\begin{remark}  In the case $N=2$ and $\omega_1+2(\omega_2-\omega_1)>0$, we have $G_{\mathbf{m}}=g'(0)\phi_1\phi_2^2$, which corresponds to the condition in Tsai and Yau \cite{TY1}, based on the explicit formulas in Buslaev and Perelman  \cite{BP2} and Soffer and Weinstein  \cite{SW3}.
These works are related to  Sigal  \cite{Sigal93CMP}.
 More general situations are considered in
   \cite{CM15APDE}, where however
 the
 $G_{\mathbf{m}}$ are obtained  after a certain number of coordinate changes, so that  the relation of the  $G_{\mathbf{m}}$  and the  $\phi_j$'s is not discussed in   \cite{CM15APDE} and is not easy to track.
\end{remark}

In \cite{CM20} it is proved that for  a generic nonlinear function $g$ the condition \eqref{eq:FGR} is a consequence of the following simpler one, which is similar to (11.6) in Sigal \cite{Sigal93CMP},
\begin{align}\label{eq:FGRsimplified}
	\int_{|k|^2=\mathbf{m}\cdot \boldsymbol{\omega}}| \widehat{\phi ^ \mathbf{m}} (k)|^2\,dS \neq 0 \text{ for all $\mathbf{m} \in \mathbf{R}_{\mathrm{min}}$ }
	\end{align}
where $ \phi ^ \mathbf{m}:= \prod _{j=1,..., N}\phi _j ^{m_j}$.
Specifically, in \cite{CM20}the following  is proved.
\begin{proposition} \label{lem:generic g}
Let $\displaystyle L=\sup \{  \frac{\| \mathbf{m} \| -1}{2} : \mathbf{m} \in \mathbf{R}_{\mathrm{min}} \} $ and suppose that the operator $H$
satisfies condition \eqref{eq:FGRsimplified}.  Then there exists an open dense
subset $\Omega $ of $\R ^{L}$ s.t.\ if $(g' (0),...., g ^{(L)} (0))\in \Omega $ such that Assumption \ref{ass:FGR} is true   for
\eqref{nls}.
\end{proposition}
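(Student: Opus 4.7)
The plan is to show, for each $\mathbf{m}\in\mathbf{R}_{\mathrm{min}}$, that
\[
F_{\mathbf{m}}(c_1,\dots,c_L) \;:=\; \int_{|k|^2=\mathbf{m}\cdot\boldsymbol{\omega}}|\widehat{G}_{\mathbf{m}}(k)|^2\,dS,\qquad c_j := g^{(j)}(0),
\]
is a real polynomial in $(c_1,\dots,c_L)\in\R^L$ that is not identically zero. Once this is established, each zero set $\{F_{\mathbf{m}}=0\}$ is a proper real algebraic subset of $\R^L$, closed with empty interior; since $\mathbf{R}_{\mathrm{min}}$ is finite, the complement
\[
\Omega \;:=\; \R^L \setminus \bigcup_{\mathbf{m}\in\mathbf{R}_{\mathrm{min}}}\{F_{\mathbf{m}}=0\}
\]
is open and dense in $\R^L$ and satisfies the required property.

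The polynomial structure is straightforward bookkeeping on \eqref{eq:indefphigroot}--\eqref{eq:defG}. Finiteness of $\mathbf{NR}_1$, $\mathbf{R}_{\mathrm{min}}$ and of each $A(m,\mathbf{n})$ makes $\widetilde{\phi}_{\mathbf{n}}(0)$ and $G_{\mathbf{m}}$ finite polynomials in the $c_j$'s with coefficients in $\Sigma^\infty$; the coefficients are built out of $\phi_1,\dots,\phi_N$ by pointwise products and by the real bounded operators $-(H-\mathbf{n}\cdot\boldsymbol{\omega})^{-1}$ on the weighted scale (well-defined because $\mathbf{n}\cdot\boldsymbol{\omega}<0\notin\sigma(H)$ for $\mathbf{n}\in\mathbf{NR}$). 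The constraint $\sum_j|\mathbf{m}_j|=|\mathbf{m}|$ in \eqref{eq:defAmm} forces $2m+1\le\|\mathbf{m}\|\le 2L+1$, and the same constraint propagates recursively, so only the variables $c_1,\dots,c_L$ can enter any $G_{\mathbf{m}}$.

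To see $F_{\mathbf{m}}\not\equiv 0$, set $L(\mathbf{m}):=(\|\mathbf{m}\|-1)/2\le L$ and restrict to the axis $c_j=0$ for $j\ne L(\mathbf{m})$. Only the $m=L(\mathbf{m})$ term in \eqref{eq:defG} can be nonzero, and on this term $A(L(\mathbf{m}),\mathbf{m})$ forces $2L(\mathbf{m})+1=\|\mathbf{m}\|=\sum_j\|\mathbf{m}_j\|$, so every $\mathbf{m}_j=\mathbf{e}_{i_j}\in\mathbf{NR}_0$ and no inductive branch activates; hence $\widetilde{\phi}_{\mathbf{m}_j}(0)=\phi_{i_j}$. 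Splitting $(i_1,\dots,i_{\|\mathbf{m}\|})$ into its $L(\mathbf{m})+1$ odd and $L(\mathbf{m})$ even positions, the constraints in \eqref{eq:defAmm} put $(m_k)_+:=\max(m_k,0)$ copies of index $k$ at odd positions and $(m_k)_-:=\max(-m_k,0)$ at even positions, and the multinomial count gives
\[
G_{\mathbf{m}}\big|_{\mathrm{slice}} \;=\; \frac{c_{L(\mathbf{m})}}{L(\mathbf{m})!}\,C(\mathbf{m})\,\phi^{\mathbf{m}},\qquad C(\mathbf{m}) \;=\; \frac{(L(\mathbf{m})+1)!\,L(\mathbf{m})!}{\prod_{k=1}^N (m_k)_+!\,(m_k)_-!} \;>\;0,
\]
where $\phi^{\mathbf{m}}=\prod_k\phi_k^{|m_k|}$ as in \eqref{eq:FGRsimplified}. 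Because every term in the slice sum is a positive multiple of the same real-valued function $\phi^{\mathbf{m}}$, no algebraic cancellation is possible; hypothesis \eqref{eq:FGRsimplified} then yields
$F_{\mathbf{m}}|_{\mathrm{slice}} = (C(\mathbf{m})/L(\mathbf{m})!)^2\, c_{L(\mathbf{m})}^2 \int_{|k|^2=\mathbf{m}\cdot\boldsymbol{\omega}}|\widehat{\phi^{\mathbf{m}}}(k)|^2\,dS\not\equiv 0$ as a polynomial in $c_{L(\mathbf{m})}$, as desired.

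The delicate point in this plan is the combinatorial slice identification: one must verify that $m=L(\mathbf{m})$ is the unique outer exponent contributing to $G_{\mathbf{m}}$ on the slice (guaranteed by the constraint $2m+1\le\|\mathbf{m}\|$ together with the vanishing of the remaining $c_j$), and that the surviving tuples exactly reconstruct a positive integer multiple of $\phi^{\mathbf{m}}$, so that the simplified Fermi golden rule \eqref{eq:FGRsimplified} can be plugged in. The polynomial structure and the genericity conclusion are then routine.
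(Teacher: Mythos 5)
The paper itself contains no proof of Proposition \ref{lem:generic g}: it is quoted from \cite{CM20} and closed with a \qed. Your argument is correct and is essentially the one used there: by the finiteness of $\mathbf{NR}_1$, $\mathbf{R}_{\mathrm{min}}$ and the sets $A(m,\mathbf{m})$, together with the bound $2m+1\le\|\mathbf{m}\|\le 2L+1$ forced by $\sum_j|\mathbf{m}_j|=|\mathbf{m}|$, each $G_{\mathbf{m}}$ is a polynomial in $(g'(0),\dots,g^{(L)}(0))$ with coefficients in $\Sigma^\infty$, and on the slice $c_j=0$, $j\ne L(\mathbf{m})$, only the tuples with all $\mathbf{m}_j\in\mathbf{NR}_0$ survive, producing the positive multiple $\frac{c_{L(\mathbf{m})}}{L(\mathbf{m})!}C(\mathbf{m})\,\phi^{\mathbf{m}}$ so that \eqref{eq:FGRsimplified} yields $F_{\mathbf{m}}\not\equiv0$; genericity then follows since a nontrivial real polynomial has nowhere dense zero set. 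One small imprecision: the well-definedness of $(H-\mathbf{n}\cdot\boldsymbol{\omega})^{-1}$ in \eqref{eq:indefphi} does not follow from $\mathbf{n}\cdot\boldsymbol{\omega}<0$ alone (as $\sigma_d(H)\subset(-\infty,0)$), but from the nonresonance condition \eqref{eq:linind}, which excludes $\mathbf{n}\cdot\boldsymbol{\omega}=\omega_j$ for $\mathbf{n}\ne\mathbf{e}_j$; this is anyway part of the construction of the refined profile taken from \cite{CM20}.
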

  \qed

For $\mathbf{z}=(z_1,\cdots,z_N)\in \C^N$, $\mathbf{m}=(m_1,\cdots,m_N)\in \Z^N$, we define
\begin{align}\label{eq:zkakko}
\mathbf{z}^\mathbf{m}&:=z_1^{(m_1)}\cdots z_N^{(m_N)} \in \C,\text{ where } z^{(m)}:=\begin{cases} z^m & m\geq 0\\ \bar z^{-m} & m<0,\end{cases}\quad \text{ and }\\
\label{eq:defzabs}
|\mathbf{z}|^k&:=(|z_1|^k,\cdots,|z_N|^k)\in \R^N,\ \|\mathbf{z}\|:=\sum |\mathbf{z}|= \sum_{j=1}^N|z_j|\in\R.
\end{align}

We will use the following notation for a ball in a Banach space $B$:
\begin{align}\label{eq:def:ball}
\mathcal{B}_B(u,r):=\{v\in B\ |\ \|v-u\|_B<r\}.
\end{align}

The  Refined Profile  is of the form $\phi(\mathbf{z}) = \mathbf{z} \cdot \boldsymbol{\phi} +o(\|\mathbf{z}\|)$ and is defined by the
following proposition, proved in \cite{CM20}.

\begin{proposition}[Refined Profile]\label{prop:rp}
For any $s \geq 0$, there exist $\delta_s>0$ and $C_s>0$ s.t.\ $\delta_s$ is nonincreasing w.r.t.\ $s\geq 0$ and there exist
\begin{align*}
\{\psi_\mathbf{m}\}_{\mathbf{m}\in \mathbf{NR}_1} &\in C^\infty ( \mathcal{B}_{\R^N}(0,\delta_s^2),(\Sigma^s)^{\sharp \mathbf{NR}_1}), \ \boldsymbol{\varpi}(\cdot) \in C^\infty (\mathcal{B}_{\R^N}(0,\delta_s^2),\R^N) \\
% \{G_{\mathbf{n}}\}_{\mathbf{n} \in \mathbf{R}_{\mathrm{min}}} &\subset H^s_{\gamma_0}
&\text{ and }\mathcal R \in C^\infty(\mathcal{B}_{\C^N}(0,\delta_s),\Sigma^s),
\end{align*}
 s.t.\ $\boldsymbol{\varpi}(0,\cdots,0)=\boldsymbol{\omega}$, $\psi_{\mathbf{m}}(0)=0$ for all $\mathbf{m}\in \mathbf{NR}_1$ and
\begin{align}\label{est:R}
\|\mathcal R(\mathbf{z})\|_{\Sigma^s}\leq C_s \|\mathbf{z} \|^2\sum_{\mathbf{m}\in \mathbf{R}_{\min}} |\mathbf{z}^{\mathbf{m}}|,
\end{align}
where $B_X(a,r):=\{u\in X\ |\ \|u-a\|_X<r\}$, and  if we set
\begin{align}\label{eq:Phianz}
\phi(\mathbf{z}):=\mathbf{z}\cdot\boldsymbol{\phi} + \sum_{\mathbf{m}\in \mathbf{NR}_1}\mathbf{z}^{\mathbf{m}}\psi_{\mathbf{m}}(|\mathbf{z}|^2)\text{ and }z_j(t)=e^{-\im \varpi_j(|\mathbf{z}|^2) t}z_j,
\end{align}
then, setting $\mathbf{z}(t)=(z_1(t),\cdots,z_n(t))$, the function $u(t):=\phi\(\mathbf{z}(t)\)$ satisfies
\begin{align}\label{eq:nlsforce}
\im \partial_t u = H u + g(|u|^2)u -\sum_{\mathbf{m}\in \mathbf{R}_{\mathrm{min}}}\mathbf{z}(t)^{\mathbf{m}} G_{\mathbf{m}} - \mathcal R(\mathbf{z}(t)),
\end{align}
where $\{G_\mathbf{m}\}_{\mathbf{R}_{\min}} \subset\(\Sigma^\infty\)^{\sharp \mathbf{R}_{\min}}$ is given in \eqref{eq:defG}.
Finally, writing $\psi_\mathbf{m}=\psi_\mathbf{m}^{(s)}$, $\boldsymbol{\varpi}=\boldsymbol{\varpi}^{(s)}$ and $\mathcal R=\mathcal R^{(s)}$, for $s_1<s_2$ we have $\psi_\mathbf{m}^{(s_1)}(|\cdot|^2)=\psi_\mathbf{m}^{(s_2)}(|\cdot|^2)$, $\boldsymbol{\varpi}^{(s_1)}(|\cdot|^2)=\boldsymbol{\varpi}^{(s_2)}(|\cdot|^2)$ and $\mathcal R^{(s_1)}=\mathcal R^{(s_2)}$ in $ \mathcal{B}_{\R^N}(0,\delta_{s_2})$.
\end{proposition}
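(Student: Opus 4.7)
The plan is to plug the ansatz $u(t)=\phi(\mathbf{z}(t))$ into \eqref{nls} and to choose $\{\psi_\mathbf{m}\}_{\mathbf{m}\in\mathbf{NR}_1}$ and $\boldsymbol{\varpi}$ so that the residual equals exactly $-\sum_{\mathbf{m}\in\mathbf{R}_{\min}}\mathbf{z}^\mathbf{m} G_\mathbf{m}-\mathcal R(\mathbf{z})$. Since $|z_j(t)|$ is conserved, $\im\partial_t u=\sum_{j}\varpi_j(|\mathbf{z}|^2)\bigl(z_j\partial_{z_j}\phi(\mathbf{z})-\bar z_j\partial_{\bar z_j}\phi(\mathbf{z})\bigr)$, so \eqref{eq:nlsforce} reduces to the time-independent identity
\begin{equation*}
\sum_{j=1}^N \varpi_j(|\mathbf{z}|^2)\bigl(z_j\partial_{z_j}-\bar z_j\partial_{\bar z_j}\bigr)\phi(\mathbf{z})-H\phi(\mathbf{z})-g(|\phi(\mathbf{z})|^2)\phi(\mathbf{z})+\sum_{\mathbf{m}\in\mathbf{R}_{\min}}\mathbf{z}^\mathbf{m} G_\mathbf{m}=-\mathcal R(\mathbf{z}).
\end{equation*}

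Next I would expand the left-hand side into monomials $\mathbf{z}^\mathbf{n}$ with $\sum\mathbf{n}=1$, a structure preserved by the gauge covariance $\phi(e^{\im\theta}\mathbf{z})=e^{\im\theta}\phi(\mathbf{z})$. Using $g(|\phi|^2)\phi=\sum_{m\geq 1}\tfrac{g^{(m)}(0)}{m!}\phi^{m+1}\bar\phi^m$ and substituting $\phi=\mathbf{z}\cdot\boldsymbol{\phi}+\sum_{\mathbf{m}\in\mathbf{NR}_1}\mathbf{z}^\mathbf{m}\psi_\mathbf{m}(|\mathbf{z}|^2)$, each product $\phi^{m+1}\bar\phi^m$ decomposes into sums indexed by the combinatorial sets $A(m,\mathbf{n})$ of \eqref{eq:defAmm}. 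Inducting on $\|\mathbf{n}\|$ and evaluating at $|\mathbf{z}|^2=0$, the coefficient of $\mathbf{z}^\mathbf{n}$ gives the linear equation $(H-\mathbf{n}\cdot\boldsymbol{\omega})\psi_\mathbf{n}(0)=-g_\mathbf{n}(0)$, in which $g_\mathbf{n}(0)$ depends only on $\psi_\mathbf{m}(0)$ with $|\mathbf{m}|\prec|\mathbf{n}|$. For $\mathbf{n}=\mathbf{e}_j$ the equation is degenerate and is used to impose $\varpi_j(0)=\omega_j$ and $\psi_{\mathbf{e}_j}(0)=0$; for $\mathbf{n}\in\mathbf{NR}_1\setminus\mathbf{NR}_0$, Assumption \ref{ass:linearInd} forces $\mathbf{n}\cdot\boldsymbol{\omega}\notin\sigma(H)$, so $H-\mathbf{n}\cdot\boldsymbol{\omega}$ is invertible on each $\Sigma^s$ (the resolvent preserves the exponential decay of Proposition \ref{prop:gam}), recovering \eqref{eq:indefphi}-\eqref{eq:indefg}; for $\mathbf{n}\in\mathbf{R}_{\min}$ the coefficient is precisely the $G_\mathbf{n}$ of \eqref{eq:defG} and is absorbed into the forcing.

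To upgrade from $|\mathbf{z}|^2=0$ to smooth functions on $\mathcal{B}_{\R^N}(0,\delta_s^2)$, I would view the coupled matching problem as a nonlinear equation $\Phi(\{\psi_\mathbf{m}\}_{\mathbf{m}\in\mathbf{NR}_1},\boldsymbol{\varpi};\mathbf{y})=0$ with $\mathbf{y}=|\mathbf{z}|^2$ and apply the implicit function theorem at $\mathbf{y}=0$. The linearization is block-triangular (in an ordering of $\mathbf{NR}_1$ by $\|\cdot\|$) with invertible diagonal blocks $H-\mathbf{n}\cdot\boldsymbol{\omega}$ together with $N$ scalar blocks fixing the $\varpi_j$, while the nonlinearity depends polynomially on the $\psi_\mathbf{m}$ with coefficients controlled by $g^{(k)}(0)$ via \eqref{eq:ggrowth}. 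The IFT in $(\Sigma^s)^{\sharp\mathbf{NR}_1}\times\R^N$ then produces smooth $\psi_\mathbf{m}(\mathbf{y})$ and $\boldsymbol{\varpi}(\mathbf{y})$, and uniqueness yields the $s$-independence on the smaller ball. The remainder $\mathcal R(\mathbf{z})$ collects the uncancelled monomials $\mathbf{z}^\mathbf{n}$ with $\sum\mathbf{n}=1$ and $\mathbf{n}\notin\mathbf{NR}_1\cup\mathbf{R}_{\min}$; by \eqref{eq:defnr1}-\eqref{eq:defRmin} each such $\mathbf{n}$ satisfies $|\mathbf{m}|\prec|\mathbf{n}|$ for some $\mathbf{m}\in\mathbf{R}_{\min}$, giving $|\mathbf{z}^\mathbf{n}|\leq\|\mathbf{z}\|^{\|\mathbf{n}\|-\|\mathbf{m}\|}|\mathbf{z}^\mathbf{m}|$ with $\|\mathbf{n}\|-\|\mathbf{m}\|\geq 2$, hence \eqref{est:R}. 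The main obstacle is the combinatorial bookkeeping: verifying that Taylor-expanding $g(|\phi|^2)\phi$ reproduces exactly the sums over $A(m,\mathbf{n})$ and that the induction terminates inside the finite sets $\mathbf{NR}_1$ and $\mathbf{R}_{\min}$; the functional-analytic inputs (decay, smoothness, growth via \eqref{eq:ggrowth}) are then routine.
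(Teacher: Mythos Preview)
The paper does not prove this proposition; it merely states it and writes \qed, deferring the proof to \cite{CM20}. Your sketch is essentially the argument carried out in \cite{CM20}: reduce \eqref{eq:nlsforce} to a stationary identity, expand $g(|\phi|^2)\phi$ in monomials $\mathbf{z}^{\mathbf{n}}$ with $\sum\mathbf{n}=1$, match coefficients inductively on $\|\mathbf{n}\|$ using the invertibility of $H-\mathbf{n}\cdot\boldsymbol{\omega}$ for $\mathbf{n}\in\mathbf{NR}_1\setminus\mathbf{NR}_0$, fix $\boldsymbol{\varpi}$ through the degenerate $\mathbf{e}_j$-equations, and then run an implicit function theorem in $|\mathbf{z}|^2$ to obtain the smooth dependence and the consistency across $s$. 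Your remainder analysis (any leftover index $\mathbf{n}$ dominates some $\mathbf{m}\in\mathbf{R}_{\min}$ with $\|\mathbf{n}\|-\|\mathbf{m}\|\geq 2$ by parity) is also the correct mechanism behind \eqref{est:R}. So your proposal matches the intended proof; there is nothing to correct.
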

 \qed

We give several formulae related to the refined profile.
Let $X$ be a Banach space and $F\in C^1(\mathcal{B}_{\C^N}(0,\delta),X)$ for some $\delta>0$.
For $\mathbf{z}\in \mathcal{B}_{\C^N}(0,\delta)$ and
 $\mathbf{w}\in \C^N$, we set
 \begin{align*}
 D_{\mathbf{z}}F(\mathbf{z})\mathbf{w}:=\left.\frac{d}{d\epsilon}\right|_{\epsilon=0}F(\mathbf{z}+\epsilon \mathbf{w}).
 \end{align*}
 For $\mathbf{z}(t)$   given by the 2nd equation of \eqref{eq:Phianz},  that is $z_j(t)=e^{-\im \varpi_j(|\mathbf{z}|^2) t}z_j$,      we have
 \begin{align*}
 \im \partial_t \mathbf{z}=\boldsymbol{\varpi}(|\mathbf{z}|^2)\mathbf{z},\ \text{where}\ \boldsymbol{\varpi}(|\mathbf{z}|^2)\mathbf{z}:=(\varpi_1(|\mathbf{z}|^2)z_1,\cdots,\varpi_N(|\mathbf{z}|^2)z_N).
 \end{align*}
 Thus, $\im \partial_t \phi(\mathbf{z}(t))=\im D_{\mathbf{z}}\phi(\mathbf{z}(t))(-\im \boldsymbol{\varpi}(|\mathbf{z}(t)|^2)\mathbf{z}(t))$ and we have the following formula identically satisfied by  $\phi (\mathbf{z})$,
 \begin{align}\label{eq:rfstationary}
\im D_{\mathbf{z}}\phi(\mathbf{z})(-\im \boldsymbol{\varpi}(|\mathbf{z}|^2)\mathbf{z})=H\phi(\mathbf{z})+g(|\phi(\mathbf{z})|^2)\phi(\mathbf{z})-\sum_{\mathbf{R}_{\mathrm{min}}}\mathbf{z}^{\mathbf{m}}G_{\mathbf{m}}-\mathcal{R}(\mathbf{z}).
 \end{align}
 Furthermore, differentiating \eqref{eq:rfstationary} w.r.t.\ $\mathbf{z}$ in  any given  direction $\widetilde{\mathbf{z}}\in \C ^N$, we obtain
 \begin{align}\label{eq:rfderiv}
 H[\mathbf{z}]D_\mathbf{z}\phi(\mathbf{z})\widetilde{\mathbf{z}}=&\im D_{\mathbf{z}}^2\phi (\mathbf{z})   (-\im \boldsymbol{\varpi}(|\mathbf{z}|^2)\mathbf{z},\widetilde{\mathbf{z}})+
 \im D_{\mathbf{z}}\phi(\mathbf{z})\(D_{\mathbf{z}}(-\im \boldsymbol{\varpi}(|\mathbf{z}|^2)\mathbf{z})\widetilde{\mathbf{z}}\)
 \\&\nonumber+\sum_{\mathbf{m}\in \mathbf{R}_{\mathrm{min}}}D_{\mathbf{z}}(\mathbf{z}^{\mathbf{m}})\widetilde{\mathbf{z}}G_{\mathbf{m}}+D_{\mathbf{z}}\mathcal{R}(\mathbf{z})\widetilde{\mathbf{z}},
 \end{align}
 where the operator $H[\mathbf{z}]$  is defined by
 \begin{align}\label{eq:Hz}
 H[\mathbf{z}]f:=Hf+g(|\phi(\mathbf{z})|^2)f+2g'(|\phi(\mathbf{z})|^2) \mathrm{Re}\(\overline{\phi(\mathbf{z})} \ f \)  \phi(\mathbf{z})
 \end{align}
and is selfadjoint for the inner product $\< u, v\>=\Re \int _{\R ^3}u\overline{v}dx$.

The refined profile  $\phi(\mathbf{z})$  contains as a special case the small standing waves bifurcating from the eigenvalues, when they are simple.

\begin{corollary}\label{cor:smallbddst}
Let $s>0$ and $j\in \{1,\cdots,N\}$.
Then,  $\phi\(z(t)\mathbf{e}_j\)$  solves \eqref{nls}  if  $z\in \mathcal B_{\C}(0,\delta_s)$ and  $z(t)=e^{-\im \varpi_j(|z\mathbf{e}_j|^2)t}z$.

\end{corollary}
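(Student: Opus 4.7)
The plan is to apply Proposition \ref{prop:rp} directly to the special vector $\mathbf{z} = z\mathbf{e}_j$ and show that the two forcing terms on the right-hand side of \eqref{eq:nlsforce} vanish identically. First I would check that the one-dimensional dynamics $z(t)=e^{-\im \varpi_j(|z\mathbf{e}_j|^2)t}z$ is consistent with the $N$-dimensional flow prescribed by the second equation in \eqref{eq:Phianz}: if $\mathbf{z}(0) = z\mathbf{e}_j$, then $z_k(0)=0$ for $k\neq j$ forces $z_k(t)\equiv 0$, while $|z_j(t)|=|z|$ is preserved, so $|\mathbf{z}(t)|^2 = |z|^2\mathbf{e}_j$ is constant in time. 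Consequently $\varpi_j(|\mathbf{z}(t)|^2) = \varpi_j(|z\mathbf{e}_j|^2)$ is also constant, and the full vector satisfies $\mathbf{z}(t) = z(t)\mathbf{e}_j$.

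The heart of the argument is a purely combinatorial observation about the exponents $\mathbf{z}^{\mathbf{m}}$ when $\mathbf{z}=z\mathbf{e}_j$. By the convention \eqref{eq:zkakko}, $(z\mathbf{e}_j)^{\mathbf{m}} = z^{(m_j)}\prod_{k\neq j} 0^{(m_k)}$, which vanishes unless $\mathbf{m}= m_j \mathbf{e}_j$ is supported at coordinate $j$ only. But for $\mathbf{m}\in\mathbf{R}_{\min}\subset\mathbf{R}$, one has $\sum \mathbf{m}=1$; so if $\mathbf{m}$ were of the form $m_j \mathbf{e}_j$, then $m_j=1$, i.e.\ $\mathbf{m}=\mathbf{e}_j$. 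However $\mathbf{e}_j\cdot\boldsymbol{\omega} = \omega_j<0$, so $\mathbf{e}_j\in\mathbf{NR}_0\subset\mathbf{NR}$, contradicting $\mathbf{m}\in\mathbf{R}$. Hence every $\mathbf{m}\in\mathbf{R}_{\min}$ has at least one nonzero component off the $j$-th coordinate, and $(z\mathbf{e}_j)^{\mathbf{m}}=0$ for all such $\mathbf{m}$.

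With this in hand, the sum $\sum_{\mathbf{m}\in\mathbf{R}_{\min}} \mathbf{z}(t)^{\mathbf{m}} G_{\mathbf{m}}$ in \eqref{eq:nlsforce} vanishes identically, and the remainder bound \eqref{est:R} gives
\begin{align*}
\|\mathcal{R}(z\mathbf{e}_j)\|_{\Sigma^s} \leq C_s \|z\mathbf{e}_j\|^2\sum_{\mathbf{m}\in\mathbf{R}_{\min}}|(z\mathbf{e}_j)^{\mathbf{m}}| = 0,
\end{align*}
so $\mathcal{R}(\mathbf{z}(t))=0$ as well. Substituting into \eqref{eq:nlsforce} collapses it to $\im\partial_t u = Hu + g(|u|^2)u$, which is exactly \eqref{nls}.

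There is essentially no analytic obstacle: the entire corollary reduces to the elementary observation that $\mathbf{e}_j\notin \mathbf{R}$, together with the quantitative estimate \eqref{est:R} on the remainder, both of which are already supplied by Proposition \ref{prop:rp} and the definitions \eqref{eq:defrnr}, \eqref{eq:defRmin}. The only point that requires a line of verification is the consistency of the scalar and vector modulation equations, which is immediate from uniqueness for the decoupled ODE system once the coordinates $z_k$ for $k\neq j$ are seen to remain zero.
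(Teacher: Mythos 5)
Your proposal is correct and follows essentially the same route as the paper: the paper's proof is the one-line observation that $(z\mathbf{e}_j)^{\mathbf{m}}=0$ for all $\mathbf{m}\in\mathbf{R}_{\mathrm{min}}$, so that both forcing terms in \eqref{eq:nlsforce} vanish by \eqref{est:R}. You merely spell out the combinatorial justification (that $\mathbf{m}=\mathbf{e}_j$ would be the only surviving index but lies in $\mathbf{NR}$, not $\mathbf{R}$) and the consistency of the scalar and vector flows, both of which the paper leaves implicit.
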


\begin{proof}
Since $(z\mathbf{e}_j)^{\mathbf{m}}=0$ for $\mathbf{m}\in \mathbf{R}_{\mathrm{min}}$, we see that from \eqref{est:R} and \eqref{eq:nlsforce} the remainder terms $\sum_{\mathbf{m}\in \mathbf{R}_{\mathrm{min}}}\mathbf{z}(t)^{\mathbf{m}} G_{\mathbf{m}} + \mathcal R(\mathbf{z}(t))$ are $0$ in \eqref{eq:nlsforce}.
Therefore, we have the conclusion.
\end{proof}

\begin{remark}
	If the eigenvalues of $H$ are not simple the above   does not hold anymore in general. See Gustafson-Phan \cite{GP11SIMA}.
\end{remark}

We call solitons, or standing waves, the functions
\begin{align}\label{eq:defbddst}
\phi_j(z):=\phi(z \mathbf{e}_j)  \text{ for $z\in \mathcal{B}_{\C}  (0, \delta _s)$.}
\end{align}

%Even when they are not solutions of \eqref{nls},  refined profiles yield  patterns  for the \eqref{nls} that last for long times.
%
%\begin{proposition}[Metastability of the refined profile]
%The result I have in mind is like the oscillation paper:
%There exists $\delta_0>0$ s.t.\ if $u(0)=\phi(\mathbf{z}_0)+\eta_0$ with $\|\mathbf{z}_0\|^2:=\delta^2\leq \delta_0^2$ and $\|\eta_0\|_{H^1}\leq \delta^3$ (say), for $t\in [0, \delta^{-M}]$ ($M$ will not be arbitrary but will be some constant related to $\mathbf{R}_{\mathrm{min}}$) we have $\| |\mathbf{z}(t)|-|\mathbf{z}_0| \|\leq \delta^3$.
%However, I have done nothing.
%If one can show the result for optimal $M$, I think it will be nice.
%\end{proposition}
%
%
%\subsubsection*{Main result}

The main result, which have first proved in \cite{CM15APDE}   is the following.
\begin{theorem}\label{thm:main}
Under the Assumptions \ref{ass:nonres}, \ref{ass:linearInd} and \ref{ass:FGR}, there exist $\delta_0>0$ and $C>0$ s.t.\ for all $u_0\in H^1$ with $\epsilon_0:=\|u_0\|_{H^1}< \delta_0$, there exists $j\in \{1,\cdots,N\}$, $z\in C^1(\R,\C)$, $\eta_+\in H^1$ and $\rho_+\geq 0$ s.t.
\begin{align}
\lim_{t\to \infty} \|u(t)- \phi_j(z(t)) - e^{\im t \Delta }\eta_+ \|_{H^1}=0, \label{eq:limit_1}
\end{align}
with  $C^{-1}\epsilon_0^2\leq \rho_+^2 + \|\eta_+\|_{H^1}^2 \leq  C \epsilon_0^2$ and
\begin{align}&
\lim_{t\to +\infty}|z (t)|=\rho_+  \ .  \label{eq:limit_2}
\end{align}
\end{theorem}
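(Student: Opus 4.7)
The plan is to decompose $u(t)=\phi(\mathbf{z}(t))+\eta(t)$ using the refined profile of Proposition~\ref{prop:rp}, fix $\mathbf{z}$ by modulation, close a coupled system for $(\mathbf{z},\eta)$ via dispersive/Strichartz/smoothing estimates for $e^{-\im tH}P_c$, and then select a single soliton using the Fermi Golden Rule encoded in Assumption~\ref{ass:FGR}.

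First, for $\|u\|_{H^1}$ small an implicit-function argument produces a unique $\mathbf{z}$ with $\|\mathbf{z}\|\lesssim\|u\|_{H^1}$ such that $\eta:=u-\phi(\mathbf{z})$ satisfies the $2N$ orthogonality conditions $\<\eta,\im D_{\mathbf{z}}\phi(\mathbf{z})\mathbf{e}_j\>=\<\eta,D_{\mathbf{z}}\phi(\mathbf{z})\mathbf{e}_j\>=0$ for $j=1,\dots,N$, whose non-degeneracy at $\mathbf{z}=0$ reduces to $\<\phi_i,\phi_j\>=\delta_{ij}$. Subtracting \eqref{eq:rfstationary} from \eqref{nls} yields
\begin{equation*}
\im\partial_t\eta=H[\mathbf{z}]\eta+\sum_{\mathbf{m}\in\mathbf{R}_{\min}}\mathbf{z}^{\mathbf{m}}G_{\mathbf{m}}+\mathcal R(\mathbf{z})+Q(\mathbf{z},\eta)+\im D_{\mathbf{z}}\phi(\mathbf{z})\(\dot{\mathbf{z}}+\im\boldsymbol{\varpi}(|\mathbf{z}|^2)\mathbf{z}\),
\end{equation*}
where $Q(\mathbf{z},\eta)$ collects the nonlinear remainder $g(|u|^2)u-g(|\phi(\mathbf{z})|^2)\phi(\mathbf{z})-H[\mathbf{z}]\eta+g(|\phi(\mathbf{z})|^2)\eta$, which is $O(\|\eta\|^2)$ with smooth coefficients in $\phi(\mathbf{z})$. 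Differentiating the orthogonality constraints in $t$ and using \eqref{eq:rfderiv} to rewrite $H[\mathbf{z}]D_{\mathbf{z}}\phi(\mathbf{z})\mathbf{e}_j$ produces a modulation ODE
\begin{equation*}
\dot{z}_j=-\im\varpi_j(|\mathbf{z}|^2)z_j+O\bigl(\|\eta\|_{\Sigma^{-k}}^2+\|\mathbf{z}\|\,\|\eta\|_{\Sigma^{-k}}+{\textstyle\sum_{\mathbf{m}\in\mathbf{R}_{\min}}}|\mathbf{z}^{\mathbf{m}}|\bigr).
\end{equation*}
No term linear in $\eta$ of order one survives in the right hand side: this is precisely the role of the refined profile, whose construction ensures cancellation of the resonant corrections modulo $\mathbf{R}_{\min}$-terms.

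Second, on a maximal interval $[0,T]$ I bootstrap simultaneously the $L^\infty_tH^1_x$ norm, a Strichartz norm and a local-smoothing $L^2_t\Sigma^{-k}_x$ norm of $P_c\eta$. Standard dispersive, Strichartz and Kato-smoothing estimates for $e^{-\im tH}P_c$, combined with the exponential spatial decay of $G_{\mathbf{m}}$ and $\phi(\mathbf{z})$ in $\Sigma^\infty$ and with \eqref{est:R}, yield an upper bound of the form
\begin{equation*}
\|\eta\|_{L^\infty_tH^1\cap\mathrm{Str}\cap L^2_t\Sigma^{-k}}\lesssim \|u_0\|_{H^1}+\sum_{\mathbf{m}\in\mathbf{R}_{\min}}\|\mathbf{z}^{\mathbf{m}}\|_{L^2(0,T)}.
\end{equation*}
For the matching lower bound, isolate in the Duhamel representation of $P_c\eta$ the resonant contribution $\int_0^t e^{-\im(t-s)H}P_c\,\mathbf{z}(s)^{\mathbf{m}}G_{\mathbf{m}}\,ds$; since $\mathbf{z}^{\mathbf{m}}\sim e^{-\im(\mathbf{m}\cdot\boldsymbol{\omega})t}\mathbf{z}_0^{\mathbf{m}}$ to leading order with $\mathbf{m}\cdot\boldsymbol{\omega}>0$ inside the continuous spectrum of $H$, one integration by parts in $t$ plus a Plemelj (limiting absorption) argument extracts the coercive form
\begin{equation*}
c\sum_{\mathbf{m}\in\mathbf{R}_{\min}}\int_0^T|\mathbf{z}(t)^{\mathbf{m}}|^2\int_{|k|^2=\mathbf{m}\cdot\boldsymbol{\omega}}|\widehat{G}_{\mathbf{m}}(k)|^2\,dS\,dt,\qquad c>0,
\end{equation*}
positive by Assumption~\ref{ass:FGR}. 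Combining the two bounds closes the bootstrap globally in time and delivers $\sum_{\mathbf{m}\in\mathbf{R}_{\min}}\int_0^\infty|\mathbf{z}(t)^{\mathbf{m}}|^2\,dt<\infty$ together with $\eta\in L^\infty_tH^1\cap\mathrm{Str}\cap L^2_t\Sigma^{-k}$.

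Finally, writing $\rho_j(t):=|z_j(t)|^2$, the modulation ODE gives $|\dot\rho_j(t)|\lesssim \|\eta\|_{\Sigma^{-k}}\|\mathbf{z}\|+\sum|\mathbf{z}^{\mathbf{m}}|\in L^1_t$, so each $\rho_j$ has a nonnegative limit $\rho_{j,\infty}$. The structure of $\mathbf{R}_{\min}$ together with \eqref{eq:linind} forces, for every pair $i\ne j$, the existence of some $\mathbf{m}\in\mathbf{R}_{\min}$ having nonzero $i$th and $j$th components; the integrability of $|\mathbf{z}^{\mathbf{m}}|^2$ then implies $\rho_{i,\infty}\rho_{j,\infty}=0$, so only one mode survives and we set $\rho_+:=\sqrt{\rho_{j,\infty}}$ for the selected index $j$. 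For that $j$, $\phi(\mathbf{z}(t))-\phi_j(z_j(t))\to 0$ in $H^1$ by smoothness of $\phi$ and Corollary~\ref{cor:smallbddst}; passing to the limit in the Duhamel formula for $\eta$ with its $L^2_t$-inhomogeneity, together with asymptotic completeness of $e^{-\im tH}P_c$ with respect to $e^{\im t\Delta}$, yields $\eta_+\in H^1$ with $\|\eta(t)-e^{\im t\Delta}\eta_+\|_{H^1}\to 0$ and produces \eqref{eq:limit_1}; relabelling $z:=z_j$ gives \eqref{eq:limit_2}. The two-sided bound on $\rho_+^2+\|\eta_+\|_{H^1}^2$ follows from mass conservation of \eqref{nls}. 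The principal obstacle is the Fermi Golden Rule step: extracting the coercive quadratic lower bound directly from Duhamel and integration-by-parts in $t$, without invoking the Hamiltonian normal form and the Darboux coordinates of \cite{CM20}.
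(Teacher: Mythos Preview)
Your overall scheme matches the paper's: modulation against the refined profile, a bootstrap combining Strichartz and local smoothing for $P_c\eta$, the resonant substitution $\widetilde\eta\mapsto\xi=\widetilde\eta+Z$ with $Z=-\sum_{\mathbf m\in\mathbf R_{\mathrm{min}}}\mathbf z^{\mathbf m}R_+(\mathbf m\cdot\boldsymbol\omega)P_cG_{\mathbf m}$, and soliton selection from $\mathbf z^{\mathbf m}\in L^2_t$. The genuine gap is precisely the step you flag as the principal obstacle, the Fermi Golden Rule bound.

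You present the FGR step as a ``matching lower bound'' read off the Duhamel formula for $P_c\eta$. This framing does not close: the coercive quantity $\sum_{\mathbf m}(\mathbf m\cdot\boldsymbol\omega)\int_0^T|\mathbf z^{\mathbf m}|^2\,\<R_+(\mathbf m\cdot\boldsymbol\omega)P_cG_{\mathbf m},\im G_{\mathbf m}\>\,dt$ is not a lower bound on any norm of $\eta$, and matching it against the upper bound $\|\eta\|\lesssim\epsilon_0+\sum\|\mathbf z^{\mathbf m}\|_{L^2}$ would not yield $\|\mathbf z^{\mathbf m}\|_{L^2}\lesssim\epsilon_0$. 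What is missing is a bounded scalar functional whose time derivative contains the FGR term with a definite sign. In the paper this functional is the energy of the refined profile: one tests the discrete identity \eqref{eq:discfund} against $\widetilde{\mathbf z}=\im\boldsymbol\varpi(|\mathbf z|^2)\mathbf z$, uses \eqref{eq:rfstationary} to recognise $\frac{d}{dt}E(\phi(\mathbf z))$, and is left with the pairing $\sum_{\mathbf m}\mathbf m\cdot\boldsymbol\omega\,\<\eta,\im\mathbf z^{\mathbf m}G_{\mathbf m}\>$; it is into \emph{this} pairing that one inserts $\eta=R[\mathbf z]\xi-(R[\mathbf z]-1)Z-Z$, and the $-Z$ piece produces the sign-definite FGR damping. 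Integrating in $t$ then uses only that $E(\phi(\mathbf z))$ and the normal-form corrections $A_1,A_2$ are $O(\epsilon_0^2)$. Your Duhamel/Plemelj heuristic correctly explains why $R_+(\mathbf m\cdot\boldsymbol\omega)$ appears, but without the feedback of $-Z$ into a Lyapunov-type identity for the discrete modes there is no inequality bounding $\int|\mathbf z^{\mathbf m}|^2$.

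A smaller imprecision: for the selection argument you need, for each pair $i\neq j$, an $\mathbf m\in\mathbf R_{\mathrm{min}}$ supported \emph{only} on $\{i,j\}$, not merely ``having nonzero $i$th and $j$th components''; otherwise $|\mathbf z^{\mathbf m}|$ could vanish through a third coordinate and $\rho_{i,\infty}\rho_{j,\infty}=0$ does not follow. The stronger statement does hold (the two-mode resonant index of smallest norm is minimal in the full problem because the remaining entries of $|\mathbf m|$ are zero), but your sentence as written is not sufficient.
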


When written in  the Modulation parameters, the NLS appears like a complicated system where some discrete modes are coupled to radiation. The discrete modes tend to produce complicated patterns, similar to the ones of a linear system with eigenvalues. However, asymptotically in time the nonlinear interaction is responsible of spilling of energy into radiation which disperses at space infinity and to the selection of a unique nonlinear standing wave.
 Theorem \ref{thm:main}  is the same of the main theorem in \cite{CM20}  and is very similar to the main theorem in \cite{CM15APDE}.
 The proofs here and in \cite{CM20} are much simpler than in  \cite{CM15APDE} or in earlier papers containing early partial results, like
  \cite{SW04RMP,TY02ATMP}. In \cite{CM15APDE}, in order to detect the nonlinear redistribution of the energy it was necessary to make full use of the hamiltonian structure of our NLS, by first introducing    Darboux coordinates and by then considering a normal forms argument. The discovery of the notion of Refined Profile made in  \cite{Maeda17SIMA} and its further development  in  \cite{CM20} allows to forgo the normal forms argument because an almost optimal system of coordinates is provided automatically by the Refined Profile. In \cite{CM20} we  introduced   Darboux coordinates in a way much simpler than in \cite{CM15APDE}. Undoubtedly, Darboux coordinates are  quite natural for a Hamiltonian system and in \cite{CM20} they contribute to simplify the system.  In the present note however, we provide a different proof which, except for the information that mass and energy are constant, thus guaranteeing the global existence of our small $H^1$ solutions, does not make explicit use of the hamiltonian structure of the equations.

\section{The proof}\label{sec:prmain}

We start from constructing the modulation coordinate.
First, we have the following.
\begin{lemma}\label{lem:lincor}
There exist  $\delta>0$ and $\mathbf z\in C^\infty (   \mathcal{B}_{  \Sigma^{-1}}  (0, \delta)  , \C^N)$ s.t.
\begin{align*}
\forall \widetilde{\mathbf{z}}\in \C^N,\ \<\im\(u-\phi(\mathbf{z}(u))\), D_{\mathbf{z}}\phi(\mathbf{z}(u))\widetilde{\mathbf{z}}\>=0.
\end{align*}
\end{lemma}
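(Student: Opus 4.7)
I would prove this by a standard implicit function theorem argument. Define $F : \mathcal{B}_{\Sigma^{-1}}(0,1) \times \mathcal{B}_{\C^N}(0,\delta_s) \to \R^{2N}$ by the $2N$ components
\begin{align*}
F_j^{(r)}(u,\mathbf{z}) := \<\im(u - \phi(\mathbf{z})), D_{\mathbf{z}}\phi(\mathbf{z})\mathbf{e}_j\>, \qquad F_j^{(i)}(u,\mathbf{z}) := \<\im(u - \phi(\mathbf{z})), D_{\mathbf{z}}\phi(\mathbf{z})(\im \mathbf{e}_j)\>,
\end{align*}
for $j = 1, \ldots, N$. Since $\widetilde{\mathbf{z}} \mapsto \<\im(u - \phi(\mathbf{z})), D_{\mathbf{z}}\phi(\mathbf{z})\widetilde{\mathbf{z}}\>$ is $\R$-linear on $\C^N$, finding $\mathbf{z}(u)$ with $F(u, \mathbf{z}(u)) = 0$ is equivalent to the orthogonality asserted in the lemma. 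By Proposition \ref{prop:rp}, $\phi$ is $C^\infty$ into $\Sigma^s$ for every $s$, so $D_{\mathbf{z}}\phi(\mathbf{z})\widetilde{\mathbf{z}} \in \Sigma^{1}$ and the duality pairing with $u \in \Sigma^{-1} = (\Sigma^{1})^*$ is continuous. Writing $F(u, \mathbf{z}) = L(\mathbf{z}) u + h(\mathbf{z})$ for suitable smooth $L, h$, we see that $F$ is jointly $C^\infty$ and $F(0,0) = 0$ since $\phi(0)=0$.

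The crux is computing the partial Jacobian $D_{\mathbf{z}} F(0,0)$. The contribution coming from differentiating the second factor $D_{\mathbf{z}} \phi(\mathbf{z})\mathbf{e}_j$ vanishes at $(0,0)$ because the first factor $\im(u - \phi(\mathbf{z}))$ already equals zero there. Using the linearization $D_{\mathbf{z}}\phi(0)\widetilde{\mathbf{z}} = \widetilde{\mathbf{z}} \cdot \boldsymbol{\phi}$, the orthonormality $\<\phi_k, \phi_j\> = \delta_{kj}$, and the fact that the $\phi_k$ are real-valued, one finds for $\widetilde{\mathbf{z}} = \mathbf{a} + \im \mathbf{b} \in \C^N$ that
\begin{align*}
D_{\mathbf{z}} F_j^{(r)}(0,0)\widetilde{\mathbf{z}} = -\<\im \widetilde{\mathbf{z}}\cdot \boldsymbol{\phi}, \phi_j\> = b_j, \qquad D_{\mathbf{z}} F_j^{(i)}(0,0)\widetilde{\mathbf{z}} = -\<\im \widetilde{\mathbf{z}}\cdot \boldsymbol{\phi}, \im \phi_j\> = -a_j.
\end{align*}
Hence, relative to the splitting $\C^N \cong \R^N \oplus \im \R^N$, the Jacobian takes the symplectic block form $\begin{pmatrix} 0 & I_N \\ -I_N & 0 \end{pmatrix}$, which is an isomorphism of $\R^{2N}$.

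The implicit function theorem in Banach spaces now produces $\delta > 0$ and a unique $\mathbf{z} \in C^\infty(\mathcal{B}_{\Sigma^{-1}}(0,\delta), \C^N)$ with $\mathbf{z}(0) = 0$ such that $F(u, \mathbf{z}(u)) \equiv 0$, which is the desired conclusion. There is no substantial obstacle in this argument; the two items that have to be checked carefully are that $F$ extends continuously to the low-regularity space $\Sigma^{-1}$ (immediate from Proposition \ref{prop:rp} since $D_{\mathbf{z}}\phi$ takes values in $\Sigma^\infty$), and the verification that the Jacobian computation above gives an invertible operator, which reduces to the orthonormality of the eigenfunctions.
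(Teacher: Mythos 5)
Your implicit function theorem argument is correct, and it is exactly the "standard" modulation-coordinate construction that the paper invokes without detail (its proof is literally the single word "Standard."); the Jacobian computation yielding the invertible block $\begin{pmatrix} 0 & I_N \\ -I_N & 0\end{pmatrix}$ from $D_{\mathbf{z}}\phi(0)\widetilde{\mathbf{z}}=\widetilde{\mathbf{z}}\cdot\boldsymbol{\phi}$ and the orthonormality of the real-valued $\phi_j$ is right. No discrepancy with the paper.
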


\begin{proof}
Standard.
\end{proof}

We set
\begin{align}
\eta(u):=u-\phi(\mathbf{z}(u)).
\end{align}
In the following we write $\eta=\eta(u)$ and $\mathbf{z}=\mathbf{z}(u)$.
Substituting $u=\phi(\mathbf{z})+\eta$ to \eqref{nls} and using \eqref{eq:rfstationary}, we have
\begin{align}\label{eq:modnls}
\im \partial_t \eta + \im D_{\mathbf{z}}\phi(\mathbf{z})\(\partial_t \mathbf{z} + \im \boldsymbol{\varpi}(|\mathbf{z}|^2)\mathbf{z}\)=H[\mathbf{z}]\eta + \sum_{\mathbf{R}_{\mathrm{min}}}\mathbf{z}^{\mathbf{m}}G_{\mathbf{m}} +\mathcal{R}(\mathbf{z})+F(\mathbf{z},\eta),
\end{align}
where
\begin{align*}
F(\mathbf{z},\eta)=g(|\phi(\mathbf{z})+\eta|^2)(\phi(\mathbf{z})+\eta)-g(|\phi(\mathbf{z})|^2)\phi(\mathbf{z})-g(|\phi(\mathbf{z})|^2)\eta -2g'(|\phi(\mathbf{z})|^2)\mathrm{Re}\(\overline{\phi(\mathbf{z})} \eta\) \phi(\mathbf{z}).
\end{align*}
Given an interval $I\subseteq \R$ we set
\begin{align}
\mathrm{Stz}^j(I):=L^\infty_t  H^j (I)   \cap L^2_t  W^{j,6}(I),\quad \mathrm{Stz}^{*j}(I):=L^1_t  H^j(I )  +  L^2_t W^{j,6/5}(I ),\ j=0,1,\label{strich_sp}
\end{align}
where $H^0=L^2$ and $W^{0,p}=L^p$ and use  Yajima's \cite{Y1} Strichartz inequalities, for $t_0\in \overline{I}$,
\begin{align}
\|e^{-\im t H}P_c v\|_{\mathrm{Stz}^j(\R)}\lesssim \|v\|_{H^j},\ \|\int _{t_0}^t e^{-\im(t-s)H}P_cf(s)\,ds\|_{\mathrm{Stz}^j(I)}\lesssim \|f\|_{\mathrm{Stz}^{*j}(I)}, \ j=0,1 .\label{strich_est}
\end{align}
Under the assumptions of Theorem \ref{thm:main}   we have $\|u\|_{L^\infty H^1(\R)}\lesssim \epsilon_0$ from energy and mass conservation.
Since  $\|u\|_{H^1}\sim \|\mathbf{z}\|+\|\eta\|_{H^1}$, we conclude
\begin{align*}
\|\mathbf{z}\|_{L^\infty_t(\R )} + \|\eta\|_{L^\infty_t H^1(\R )}\lesssim \epsilon_0.
\end{align*}

%We now consider the Hamiltonian system   in the $(\mathbf{z},\eta )$ with Hamiltonian $K$ and symplectic form $\Omega _1$. Then we have the following.

 \begin{theorem}[Main Estimates]\label{thm:mainbounds}
There exist $\delta _0>0$ and $C_0>0$ s.t.\ if $\epsilon_0=\|u_0\|_{H^1}< \delta_0$, we have
\begin{align}
   \|  \eta \| _{\mathrm{Stz}^1(I)} +\sum_{\mathbf{m}\in \mathbf{R}_{\mathrm{min}} }\|  \mathbf{z}^{\mathbf{m}} \| _{L^2_t(I)} + \|\partial_t \mathbf{z}+\im \boldsymbol{\varpi}(|\mathbf{z}|^2)\mathbf{z}\|_{L^2_t(I)}&\le
  C   \epsilon_0,
  \label{Strichartzradiation}
\end{align}
      for $I= [0,\infty )$ and $C=C_0$.
      %Furthermore, we have $\frac{d}{dt}|\mathbf{z} |^2 \in L^1([0,\infty),\R^N)$
\end{theorem}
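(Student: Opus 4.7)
The proof will be a continuity/bootstrap argument. Let
\[
\mathcal{N}(T):=\|\eta\|_{\mathrm{Stz}^1([0,T])}+\sum_{\mathbf{m}\in\mathbf{R}_{\min}}\|\mathbf{z}^{\mathbf{m}}\|_{L^2_t([0,T])}+\|\partial_t\mathbf{z}+\im\boldsymbol{\varpi}(|\mathbf{z}|^2)\mathbf{z}\|_{L^2_t([0,T])}.
\]
Since $\mathcal{N}(0)\lesssim\epsilon_0$ and $\mathcal{N}$ is continuous in $T$, it suffices to show that, for $\epsilon_0$ sufficiently small, the bootstrap hypothesis $\mathcal{N}(T)\leq 2C_0\epsilon_0$ implies $\mathcal{N}(T)\leq \tfrac{3}{2}C_0\epsilon_0$. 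Throughout the argument I will freely use $\|\mathbf{z}\|_{L^\infty_t}+\|\eta\|_{L^\infty_tH^1}\lesssim\epsilon_0$, which follows from mass and energy conservation.

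\textbf{Modulation bound.} Differentiate the orthogonality relation from Lemma \ref{lem:lincor} in $t$, so that pairing \eqref{eq:modnls} with $\im D_{\mathbf{z}}\phi(\mathbf{z})\widetilde{\mathbf{z}}$ (using $\<\cdot,\cdot\>=\Re\int\cdot\overline{\cdot}\,dx$) annihilates $\partial_t\eta$ up to an explicit remainder. The coefficient of the unknown $\dot{\mathbf{z}}_{\mathrm{err}}:=\partial_t\mathbf{z}+\im\boldsymbol{\varpi}(|\mathbf{z}|^2)\mathbf{z}$ is a matrix which, since $D_{\mathbf{z}}\phi(0)\widetilde{\mathbf{z}}=\widetilde{\mathbf{z}}\cdot\boldsymbol{\phi}$ and $\|\phi_j\|_{L^2}=1$, is a small perturbation of the standard symplectic matrix. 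Inverting it gives the pointwise estimate
\[
|\dot{\mathbf{z}}_{\mathrm{err}}|\lesssim \|\mathbf{z}\|\,\|\eta\|_{\Sigma^{-1}}+\sum_{\mathbf{m}\in\mathbf{R}_{\min}}|\mathbf{z}^{\mathbf{m}}|+\|\mathbf{z}\|\,\|\eta\|_{L^2}^2+\|\mathbf{z}\|^{2}\sum_{\mathbf{m}\in\mathbf{R}_{\min}}|\mathbf{z}^{\mathbf{m}}|,
\]
using the exponential localization of $D_{\mathbf{z}}\phi$, the bound \eqref{est:R} for $\mathcal R$, and standard estimates on $F(\mathbf{z},\eta)$. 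Squaring and integrating in $t$ yields $\|\dot{\mathbf{z}}_{\mathrm{err}}\|_{L^2_t}\lesssim\epsilon_0\,\mathcal{N}(T)+\mathcal{N}(T)$, which is $\ll C_0\epsilon_0$ provided the $L^2_t$ control of $\mathbf{z}^{\mathbf{m}}$ and the Strichartz control of $\eta$ can be closed.

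\textbf{Strichartz bound for $\eta$.} Apply $P_c$ to \eqref{eq:modnls} and rewrite using $H=H[\mathbf{z}]-(g(|\phi(\mathbf z)|^2)+2g'(|\phi(\mathbf z)|^2)\Re(\overline{\phi(\mathbf{z})}\cdot)\phi(\mathbf z))$. Duhamel with $e^{-\im tH}$ then gives, via \eqref{strich_est},
\[
\|P_c\eta\|_{\mathrm{Stz}^1}\lesssim \epsilon_0+\|(H[\mathbf{z}]-H)\eta\|_{\mathrm{Stz}^{*1}}+\Big\|\sum_{\mathbf{m}\in\mathbf{R}_{\min}}\mathbf{z}^{\mathbf{m}}G_{\mathbf{m}}\Big\|_{\mathrm{Stz}^{*1}}+\|\mathcal R(\mathbf{z})\|_{\mathrm{Stz}^{*1}}+\|F(\mathbf{z},\eta)\|_{\mathrm{Stz}^{*1}}+\|D_{\mathbf{z}}\phi(\mathbf{z})\dot{\mathbf{z}}_{\mathrm{err}}\|_{\mathrm{Stz}^{*1}}.
\]
Since $P_d\eta=O(\|\mathbf{z}\|\|\eta\|_{\Sigma^{-1}})$ (the orthogonality of Lemma \ref{lem:lincor} reduces to $\eta\perp\phi_j$ modulo $O(\mathbf{z})$), $\|\eta\|_{\mathrm{Stz}^1}$ is controlled by the same quantity. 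The first term is $O(\epsilon_0^2\|\eta\|_{\mathrm{Stz}^1})$; the refined-profile forcing is estimated in $L^2_tW^{1,6/5}$ using $G_{\mathbf{m}}\in\Sigma^\infty$ and the bootstrap bound on $\|\mathbf{z}^{\mathbf{m}}\|_{L^2_t}$; the remainder $\mathcal R$ is handled by \eqref{est:R}; $F(\mathbf{z},\eta)$ is a standard higher-order nonlinearity in $\eta$; and the modulation error is controlled by the preceding paragraph. Each of these is $o(\epsilon_0)+o(1)\mathcal{N}(T)$.

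\textbf{Discrete mode $L^2_t$ bound via the FGR (the main obstacle).} This is the heart of the proof and is the only place Assumption \ref{ass:FGR} is used. Set $R(t):=e^{\im tH}P_c\eta(t)$. From the equation above for $P_c\eta$,
\[
R(T)-R(0)=-\im\int_0^T e^{\im sH}P_c\Big(\sum_{\mathbf{m}\in\mathbf{R}_{\min}}\mathbf{z}(s)^{\mathbf{m}}G_{\mathbf{m}}\Big)\,ds+\mathrm{(harmless)},
\]
where the harmless part enjoys a Strichartz/energy bound of order $\epsilon_0\mathcal{N}(T)$ in $L^\infty_tL^2$. Using the distorted Fourier transform associated to $H$, and writing $\mathbf{z}(s)^{\mathbf{m}}=e^{-\im\mathbf{m}\cdot\boldsymbol{\omega} s}Z_{\mathbf{m}}(s)$ with $Z_{\mathbf{m}}$ slowly varying (its derivative is $O(\dot{\mathbf{z}}_{\mathrm{err}}+\epsilon_0^2\|\mathbf{z}\|)$), Plancherel converts the main driving term to
\[
\widehat{G}_{\mathbf{m}}(k)\int_0^T e^{\im(|k|^2-\mathbf{m}\cdot\boldsymbol{\omega})s}Z_{\mathbf{m}}(s)\,ds.
\]
Restricting to a thin shell around $|k|^2=\mathbf{m}\cdot\boldsymbol{\omega}$ and invoking the trace theorem together with the positivity provided by \eqref{eq:FGR}, an application of the sharp Plancherel identity yields, after summing on $\mathbf{m}\in\mathbf{R}_{\min}$ (the different shells are disjoint, so cross terms contribute negligibly by integration by parts in $s$),
\[
\sum_{\mathbf{m}\in\mathbf{R}_{\min}}\|\mathbf{z}^{\mathbf{m}}\|_{L^2_t([0,T])}^2\;\lesssim\;\|R(T)\|_{L^2}^2+\|R(0)\|_{L^2}^2+\epsilon_0^2\,\mathcal{N}(T)^2\;\lesssim\;\epsilon_0^2+\epsilon_0^2\,\mathcal{N}(T)^2.
\]
Combining the three bounds and choosing $C_0$ large and $\epsilon_0$ small closes the bootstrap. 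The technically delicate point, and the only substantive one, is the FGR extraction in this last paragraph: it requires that after the slow/fast decomposition of $\mathbf{z}^{\mathbf{m}}$ the remaining error terms be small enough (in appropriate Strichartz-compatible norms) that they do not swamp the positivity coming from \eqref{eq:FGR}.
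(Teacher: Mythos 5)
Your overall architecture (bootstrap on the three quantities, modulation estimate, Strichartz estimate for $\eta$ with $P_d\eta$ controlled via the $R[\mathbf{z}]$-type inverse) matches the paper's reduction to Proposition \ref{prop:mainbounds} and its Lemmas \ref{est_eta} and \ref{lem:zpres}, and those parts are fine modulo the slightly circular phrasing of the modulation bound (the correct logic is that $\|\partial_t\mathbf{z}+\im\boldsymbol{\varpi}\mathbf{z}\|_{L^2}$ is controlled by $\sum\|\mathbf{z}^{\mathbf{m}}\|_{L^2}$ plus terms carrying an extra factor $\epsilon_0^2$, so it is estimated \emph{last}, after the FGR step). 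The genuine gap is in your Fermi Golden Rule paragraph. You claim that restricting $\widehat{G}_{\mathbf{m}}(k)\int_0^T e^{\im(|k|^2-\mathbf{m}\cdot\boldsymbol{\omega})s}Z_{\mathbf{m}}(s)\,ds$ to a thin shell around $|k|^2=\mathbf{m}\cdot\boldsymbol{\omega}$ and applying Plancherel yields a lower bound $\gtrsim\|\mathbf{z}^{\mathbf{m}}\|_{L^2([0,T])}^2$. Plancherel in $s$ gives $\int_{\R}|\int_0^Te^{\im\lambda s}Z_{\mathbf{m}}\,ds|^2d\lambda=2\pi\|Z_{\mathbf{m}}\|_{L^2([0,T])}^2$, but the shell restriction only captures the portion of this integral with $|\lambda|$ small, i.e.\ the low temporal frequencies of $Z_{\mathbf{m}}(s)=e^{\im\mathbf{m}\cdot\boldsymbol{\omega}s}\mathbf{z}(s)^{\mathbf{m}}$. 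That function is \emph{not} uniformly slowly varying on $[0,T]$: its derivative contains the term $\im\mathbf{m}\cdot(\boldsymbol{\omega}-\boldsymbol{\varpi}(|\mathbf{z}|^2))Z_{\mathbf{m}}$, and since $\boldsymbol{\varpi}(|\mathbf{z}|^2)-\boldsymbol{\omega}=O(\epsilon_0^2)$ does not vanish, the phase drifts by $O(1)$ on time scales $T\gtrsim\epsilon_0^{-2}$ (exactly the scales on which the whole theorem lives). So the shell restriction can in principle miss an order-one fraction of $\|Z_{\mathbf{m}}\|_{L^2}^2$, and "the sharp Plancherel identity" does not deliver the coercivity you need; making it work would require widening the shell to width $\sim\epsilon_0^2$ and re-expanding the resolvent there, which is precisely the delicate analysis you have not supplied.

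The paper circumvents this by never taking a Fourier transform in time. It subtracts from $\widetilde{\eta}=P_c\eta$ the explicit outgoing profile $Z(\mathbf{z})=-\sum\mathbf{z}^{\mathbf{m}}R_+(\mathbf{m}\cdot\boldsymbol{\omega})P_cG_{\mathbf{m}}$, proves $\|\xi\|_{L^2\Sigma^{0-}}\lesssim\epsilon_0$ for $\xi=\widetilde{\eta}+Z$ using the local decay estimate \eqref{eq:outgoing} (Lemma \ref{lem:estxi}), and then derives the pointwise-in-time Lyapunov identity \eqref{eq:lFGR9}: the derivative of $E(\phi(\mathbf{z}))-A_1-A_2$ equals $-\sum\mathbf{m}\cdot\boldsymbol{\omega}\,|\mathbf{z}^{\mathbf{m}}|^2\<R_+(\mathbf{m}\cdot\boldsymbol{\omega})P_cG_{\mathbf{m}},\im G_{\mathbf{m}}\>$ plus a coupling to $\xi$ plus $L^1_t$-integrable errors. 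The sign-definiteness of $\<R_+(\mathbf{m}\cdot\boldsymbol{\omega})P_cG_{\mathbf{m}},\im G_{\mathbf{m}}\>$, coming from the Plemelj formula and Assumption \ref{ass:FGR}, is what replaces your shell-Plancherel step; the frequency drift $\boldsymbol{\varpi}-\boldsymbol{\omega}$ only ever appears multiplied by $\mathbf{z}^{\mathbf{m}}$ or $\mathbf{z}^{\mathbf{n}}\overline{\mathbf{z}^{\mathbf{m}}}$ (cf.\ \eqref{znmnormal}) and is therefore harmless. You should either adopt this resolvent-subtraction route or supply the missing low-frequency concentration argument; as written, the key coercivity claim is unjustified.
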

Notice that \eqref{Strichartzradiation}, the  equation \eqref{eq:modnls}  satisfied by $\eta$,  estimate  \eqref{est:R} for $\mathcal{R}(\mathbf{z})$    and   Lemma \ref{lem:estF} below  for $F(\mathbf{z},\eta )$, allow to prove in a standard and elementary   fashion  that
$\eta (t)$    scatters as $t\to +\infty$, i.e. there exists $\eta _+\in H^1$ such that $ \| \eta (t) - e^{\im t\Delta }\eta _+\| _{H^1}\xrightarrow{t\to +\infty }0$. From
\eqref{Strichartzradiation} we have $\| \eta _+ \| _{H^1}\le  C   \epsilon_0$.

 \noindent Using mass conservation we have
\begin{align*}&
     \|   \phi (\mathbf{z} (t)) \|_{L^2}  ^{ 2}  =   \|   u_0 \|_{L^2}  ^{ 2}  - 2\< \phi (\mathbf{z}(t)), e^{\im t\Delta }\eta _+ \> -   2\< \phi (\mathbf{z}(t)), \eta (t)  - e^{\im t\Delta }\eta _+ \> - \|  \eta (t)  \|_{L^2}  ^{ 2}\\& \xrightarrow{t\to +\infty }\|   u_0 \|_{L^2}  ^{ 2}-  \|  \eta _+  \|_{L^2}  ^{ 2}.
\end{align*}
So, by $ \|   \phi (\mathbf{z} (t)) \|_{L^2}  ^{ 2}  = \| \mathbf{z} (t) \| ^2 +o(\| \mathbf{z} (t) \| ^2)$,  we get $\displaystyle \lim _{t\to +\infty}\| \mathbf{z} (t) \| ^2 =\rho _+ ^2$
for some $0\le \rho _+\le 2  C   \epsilon_0$.

The fact that $\mathbf{z} ^{\mathbf{m}} \in L^2(\R _+ )$ and, as it is easy to see, $\partial _t (\mathbf{z} ^{\mathbf{m}}) \in L^\infty(\R _+ )\cap C^0([0,\infty )$, imply
 $\mathbf{z} ^{\mathbf{m}} \xrightarrow{t\to +\infty }0 $  for any $  \mathbf{m}\in \mathbf{R}_{\mathrm{min}}$. This implies $z_k \xrightarrow{t\to +\infty }0 $ for all $k$ except at most for one,
  yielding  the selection of one coordinate $j$ in the statement of
Theorem \ref{thm:main}.
The proof that Theorem \ref{thm:mainbounds}  implies Theorem \ref{thm:main} is like in \cite{CM15APDE}.

By completely routine arguments  discussed in \cite{CM15APDE},
\eqref{Strichartzradiation} for  $I= [0,\infty )$ is a consequence of the following Proposition.

\begin{proposition}\label{prop:mainbounds} There exists  a  constant $c_0>0$ s.t.\
for any  $C_0>c_0$ there is a value    $\delta _0= \delta _0(C_0)   $ s.t.\  if    \eqref{Strichartzradiation}
holds  for $I=[0,T]$ for some $T>0$, for $C=C_0$  and for $u_0\in B_{H^1}(0,\delta_0)$,
then in fact for $I=[0,T]$  the inequalities  \eqref{Strichartzradiation} holds  for   $C=C_0/2$.
\end{proposition}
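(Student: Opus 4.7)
\section*{Proof proposal for Proposition \ref{prop:mainbounds}}

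The plan is a bootstrap built on three ingredients, run in parallel: a modulation estimate for $\partial_t\mathbf{z}+\im\boldsymbol{\varpi}(|\mathbf{z}|^2)\mathbf{z}$, a Strichartz estimate for $\eta$ forced by $\mathbf{z}^{\mathbf{m}}G_{\mathbf{m}}$, $\mathcal R(\mathbf{z})$ and $F(\mathbf{z},\eta)$, and a Fermi--Golden--Rule estimate for $\|\mathbf{z}^{\mathbf{m}}\|_{L^2_t}$ on $I=[0,T]$. Throughout one fixes the \emph{a priori} bounds \eqref{Strichartzradiation} with constant $C_0$, uses $\|\mathbf{z}\|_{L^\infty_t}\lesssim \epsilon_0$ and $\|\eta\|_{L^\infty_tH^1}\lesssim \epsilon_0$, and aims to recover \eqref{Strichartzradiation} with $C_0/2$ by a combination of linear gain and nonlinear smallness once $\delta_0$ is taken small.

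First, I would differentiate in $t$ the orthogonality relation from Lemma \ref{lem:lincor} and test \eqref{eq:modnls} against $D_{\mathbf{z}}\phi(\mathbf{z})\widetilde{\mathbf{z}}$ for $\widetilde{\mathbf{z}}\in\C^N$. Since $D_{\mathbf{z}}\phi(\mathbf{z})\widetilde{\mathbf{z}}=\widetilde{\mathbf{z}}\cdot\boldsymbol\phi+O(\|\mathbf{z}\|)$, the Gram matrix of the pairings is a perturbation of the identity and can be inverted; this yields
\begin{align*}
|\partial_t\mathbf{z}+\im\boldsymbol{\varpi}(|\mathbf{z}|^2)\mathbf{z}|\lesssim \|\eta\|_{\Sigma^{-1}}\Bigl(\|\eta\|_{\Sigma^{-1}}+\sum_{\mathbf{m}\in\mathbf{R}_{\min}}|\mathbf{z}^{\mathbf{m}}|\Bigr)+\|\mathbf{z}\|^2\sum_{\mathbf{m}\in\mathbf{R}_{\min}}|\mathbf{z}^{\mathbf{m}}|,
\end{align*}
where the cross terms involving $\eta$ and $G_{\mathbf{m}},\mathcal R(\mathbf{z}),F(\mathbf{z},\eta)$ are handled with the exponential decay of $G_{\mathbf{m}}$ and the estimate \eqref{est:R}. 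The desired $L^2_t$ bound $\le\tfrac12 C_0\epsilon_0$ then follows from the \emph{a priori} $L^2_t$ bounds on $\mathbf{z}^{\mathbf{m}}$ and $\eta$.

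Next, for the Strichartz estimate I would rewrite \eqref{eq:modnls} as $\im\partial_t\eta=H\eta+\mathcal N$, moving $(H[\mathbf{z}]-H)\eta$, $D_{\mathbf{z}}\phi(\mathbf{z})(\partial_t\mathbf{z}+\im\boldsymbol\varpi\mathbf{z})$, $\sum_{\mathbf{m}}\mathbf{z}^{\mathbf{m}}G_{\mathbf{m}}$, $\mathcal R(\mathbf{z})$ and $F(\mathbf{z},\eta)$ into the right--hand side $\mathcal N$. Applying $P_c$ loses only discrete spectrum components, which by the orthogonality from Lemma \ref{lem:lincor} are essentially controlled by $\|\mathbf{z}\|^2\|\eta\|$. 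Yajima's \eqref{strich_est}, combined with $\eta(0)=O(\epsilon_0)$, gives
\begin{align*}
\|\eta\|_{\mathrm{Stz}^1(I)}\lesssim \epsilon_0+\sum_{\mathbf{m}\in\mathbf{R}_{\min}}\|\mathbf{z}^{\mathbf{m}}\|_{L^2_t(I)}\|G_{\mathbf{m}}\|_{W^{1,6/5}}+\|\mathcal R(\mathbf{z})\|_{L^2_tW^{1,6/5}}+\|F(\mathbf{z},\eta)\|_{\mathrm{Stz}^{*1}(I)},
\end{align*}
where $\|\mathcal R(\mathbf{z})\|_{L^2_tW^{1,6/5}}\lesssim \epsilon_0^2\sum\|\mathbf{z}^{\mathbf{m}}\|_{L^2_t}$ by \eqref{est:R}, and $F(\mathbf{z},\eta)$ is treated by the cubic-and-higher pointwise bound of Lemma \ref{lem:estF} combined with H\"older in Strichartz norms, which gains a factor $\epsilon_0$.

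The main obstacle, as always in this framework, is the $L^2_t$ bound on the resonant monomials $\mathbf{z}^{\mathbf{m}}$. I would split $\eta=\eta_{\mathrm{res}}+\xi$ where
\begin{align*}
\eta_{\mathrm{res}}:=\sum_{\mathbf{m}\in\mathbf{R}_{\min}}\mathbf{z}^{\mathbf{m}}\bigl(H-\mathbf{m}\cdot\boldsymbol\omega-\im 0\bigr)^{-1}G_{\mathbf{m}},
\end{align*}
which is the forced response corresponding to the frequencies $\mathbf{m}\cdot\boldsymbol{\omega}>0$ lying in the continuous spectrum. Substituting into \eqref{eq:modnls}, computing $\partial_t\langle\eta,\mathbf{z}^{\mathbf{m}}(H-\mathbf{m}\cdot\boldsymbol\omega-\im 0)^{-1}G_{\mathbf{m}}\rangle$ (or equivalently $\partial_t$ of an appropriate quadratic functional in $\eta_{\mathrm{res}}$) and using Plemelj's formula together with Assumption \ref{ass:FGR}, the diagonal terms produce
\begin{align*}
\sum_{\mathbf{m}\in\mathbf{R}_{\min}}\Bigl(\int_{|k|^2=\mathbf{m}\cdot\boldsymbol\omega}|\widehat G_{\mathbf{m}}(k)|^2\,dS\Bigr)|\mathbf{z}^{\mathbf{m}}|^2,
\end{align*}
which is coercive. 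All off--diagonal terms, as well as the contributions from $\xi$, $\mathcal R$, $F$ and the modulation remainder, are of higher order in $\epsilon_0$ (or $L^2_t$--small by the already established Strichartz bound on $\xi$). Integrating in $t\in[0,T]$ and using $|\eta(t)|_{L^2}^2=O(\epsilon_0^2)$ as a uniform boundary term yields $\sum\|\mathbf{z}^{\mathbf{m}}\|_{L^2_t(I)}^2\lesssim \epsilon_0^2+\epsilon_0\,C_0^2\epsilon_0^2$, hence $\le(C_0/2)^2\epsilon_0^2$ for $\delta_0$ small. Feeding this back into the two previous estimates closes the bootstrap.
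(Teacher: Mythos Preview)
Your three--step bootstrap (modulation, Strichartz, FGR) matches the paper's architecture, and your Strichartz step is essentially Lemma~\ref{est_eta}. But two concrete points in the proposal do not go through as written. First, the displayed modulation bound is too optimistic: testing \eqref{eq:modnls} against $D_{\mathbf{z}}\phi(\mathbf{z})\widetilde{\mathbf{z}}$, the forcing $\sum_{\mathbf{m}}\mathbf{z}^{\mathbf{m}}G_{\mathbf{m}}$ pairs with $D_{\mathbf{z}}\phi(0)\widetilde{\mathbf{z}}=\widetilde{\mathbf{z}}\cdot\boldsymbol{\phi}$ to give a leading term $\mathbf{z}^{\mathbf{m}}\<G_{\mathbf{m}},\phi_j\>$ of size $|\mathbf{z}^{\mathbf{m}}|$, not $\|\mathbf{z}\|^2|\mathbf{z}^{\mathbf{m}}|$. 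The correct estimate is that of Lemma~\ref{lem:zpres}, namely $\|\partial_t\mathbf{z}+\im\boldsymbol{\varpi}\mathbf{z}\|_{L^2(I)}\lesssim\sum_{\mathbf{m}}\|\mathbf{z}^{\mathbf{m}}\|_{L^2(I)}+C_0\epsilon_0^3$; you cannot get $\le\tfrac12C_0\epsilon_0$ from the \emph{a priori} bounds alone and must wait for the improved $\|\mathbf{z}^{\mathbf{m}}\|_{L^2}$ before feeding back. Second, in the FGR step your remark that the off--diagonal terms $\mathbf{z}^{\mathbf{m}}\overline{\mathbf{z}^{\mathbf{n}}}$, $\mathbf{m}\neq\mathbf{n}$, are ``of higher order'' is false: they are the same order as the coercive diagonal ones. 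The paper removes them via the normal--form identity \eqref{znmnormal}, writing them as $\partial_t(\,\cdot\,)$ plus genuinely higher--order remainders. Similarly, your ``already established Strichartz bound on $\xi$'' is not available: $R_+(\mathbf{m}\cdot\boldsymbol{\omega})P_cG_{\mathbf{m}}\notin L^2$, so $\eta_{\mathrm{res}}$ and $\xi$ are not in $H^1$ and Strichartz does not apply. The paper instead bounds $\xi$ in the weighted space $L^2_t\Sigma^{0-}$ via the outgoing smoothing estimate \eqref{eq:outgoing} (Lemma~\ref{lem:estxi}), obtaining $\|\xi\|_{L^2\Sigma^{0-}}\lesssim\epsilon_0+C_0\epsilon_0^3$ with no $\|\mathbf{z}^{\mathbf{m}}\|_{L^2}$ on the right; this $C_0$--independent input is what ultimately closes the loop.

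There is also a genuine difference in the FGR functional. Rather than differentiating $\<\eta,\mathbf{z}^{\mathbf{m}}R_+G_{\mathbf{m}}\>$ directly, the paper substitutes $\widetilde{\mathbf{z}}=\im\boldsymbol{\varpi}(|\mathbf{z}|^2)\mathbf{z}$ into the identity \eqref{eq:discfund} and recognises $\<H\phi(\mathbf{z})+g(|\phi(\mathbf{z})|^2)\phi(\mathbf{z}),D_{\mathbf{z}}\phi(\mathbf{z})\partial_t\mathbf{z}\>=\frac{d}{dt}E(\phi(\mathbf{z}))$. The energy $E(\phi(\mathbf{z}))$ then serves as the bounded functional whose time derivative, after the substitution $\eta=R[\mathbf{z}]\xi-(R[\mathbf{z}]-1)Z-Z$ and the normal--form reductions above, equals the coercive dissipation $-\sum_{\mathbf{m}}\mathbf{m}\cdot\boldsymbol{\omega}\,|\mathbf{z}^{\mathbf{m}}|^2\<R_+(\mathbf{m}\cdot\boldsymbol{\omega})P_cG_{\mathbf{m}},\im G_{\mathbf{m}}\>$ plus integrable errors; see \eqref{eq:dtE1}--\eqref{eq:lFGR9}. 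Your virial--type route could be made to work, but only after supplying the ingredients above.
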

In the remainder of the paper   we prove {Proposition} \ref{prop:mainbounds}.

\subsection{Estimate of the continuous variable $\eta$}

In the following, we set $\epsilon_0=\|u_0\|_{H^1}$.
Further, when we use $\lesssim$, the implicit constant will not depend on $C_0$.
We start from the estimate of the remainder term $F$.
\begin{lemma}\label{lem:estF}
Under the assumption of Proposition \ref{prop:mainbounds}, we have
\begin{align}
\|F(\mathbf{z},\eta)\|_{\mathrm{Stz}^{*1}(I)}\lesssim C_0\epsilon_0^3.
\end{align}
\end{lemma}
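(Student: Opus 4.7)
My plan is to exploit the fact that $F$ is, by construction, the second-order Taylor remainder in $\eta$ of the nonlinearity $u\mapsto g(|u|^2)u$ expanded at $u=\phi(\mathbf{z})$. Using the growth hypothesis \eqref{eq:ggrowth} inside the integral form of Taylor's theorem, I would dominate $|F|$ and $|\nabla F|$ by a finite sum of monomials of the form $|\phi(\mathbf{z})|^a|\eta|^b$, with one factor possibly replaced by its gradient, subject to $b\geq 2$ and $3\leq a+b\leq 5$. The task then reduces to bounding each such monomial in $L^2_t W^{1,6/5}_x\hookrightarrow \mathrm{Stz}^{*1}(I)$, which is the larger of the two components of the dual Strichartz space.

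Three inputs will be combined. First, Proposition~\ref{prop:rp} together with the Schwartz decay of the $\phi_j$'s gives $\|\phi(\mathbf{z}(t))\|_{W^{k,p}_x}\lesssim \|\mathbf{z}(t)\|\lesssim \epsilon_0$ for every Sobolev index $(k,p)$. Second, mass and energy conservation already provide $\|\eta\|_{L^\infty_t H^1_x}\lesssim \epsilon_0$, a bound which is crucially \emph{independent} of the bootstrap constant $C_0$. Third, $\|\eta\|_{L^2_t W^{1,6}_x}\leq C_0\epsilon_0$ is the bootstrap hypothesis \eqref{Strichartzradiation}. The bookkeeping principle I plan to enforce is that this last norm is consumed \emph{at most once} per monomial, so only a single factor of $C_0$ appears; the remaining $\eta$-factors are bounded by Sobolev embedding $H^1\hookrightarrow L^p_x$, $2\leq p\leq 6$, applied to the $L^\infty_t H^1_x$ control, and the $\phi(\mathbf{z})$-factors by the uniform Schwartz estimate above. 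Since every monomial has total degree $\geq 3$ and $\epsilon_0\leq \delta_0\leq 1$, each is bounded by $C_0\epsilon_0^{a+b}\lesssim C_0\epsilon_0^3$.

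A representative instance is the leading quadratic-in-$\eta$ term $\phi(\mathbf{z})\,\eta\,\nabla\eta$, which I would estimate as
\begin{align*}
\|\phi(\mathbf{z})\,\eta\,\nabla\eta\|_{L^2_tL^{6/5}_x}\lesssim \|\phi(\mathbf{z})\|_{L^\infty_tL^2_x}\,\|\eta\|_{L^\infty_tL^6_x}\,\|\nabla\eta\|_{L^2_tL^6_x}\lesssim \epsilon_0\cdot \epsilon_0\cdot C_0\epsilon_0,
\end{align*}
while the pure-$\eta$ cubic $\eta^2\nabla\eta$ falls to $\|\eta\|_{L^\infty_tL^3_x}^2\|\nabla\eta\|_{L^2_tL^6_x}\lesssim \epsilon_0^2\cdot C_0\epsilon_0$; the quintic terms inherit the same shape with an extra power of $\epsilon_0$ and are therefore smaller.

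The only technically subtle step, and what I expect to be the main obstacle, is the case-by-case verification of the Hölder and Sobolev exponent balance after the derivative is distributed by Leibniz: the gradient must land either on the single factor placed in $L^2_t W^{1,6}_x$, or on $\phi(\mathbf{z})$ (which is uniformly smooth), but never on more than one $\eta$-factor at once. Because three space dimensions is $H^1$-subcritical for every monomial of degree $\leq 5$, this verification is routine and the generous regularity of $\phi(\mathbf{z})$ supplies all the flexibility needed.
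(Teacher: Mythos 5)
Your proposal is correct and follows essentially the same route as the paper: the paper likewise derives a pointwise bound $|F|+|\nabla_x F|\lesssim (1+|\eta|^2)(|\phi(\mathbf{z})|+|\nabla_x\phi(\mathbf{z})|+|\eta|)|\eta|(|\eta|+|\nabla_x\eta|)$ from \eqref{eq:ggrowth} and then concludes by H\"older and Sobolev, placing the result in the $L^2_tW^{1,6/5}_x$ component of $\mathrm{Stz}^{*1}$. Your more detailed bookkeeping (consuming the bootstrap norm $\|\eta\|_{L^2_tW^{1,6}}\le C_0\epsilon_0$ exactly once and using the $C_0$-independent bound $\|\eta\|_{L^\infty_tH^1}\lesssim\epsilon_0$ for the remaining factors) is exactly what the paper's terse "H\"older and Sobolev estimates" hides, and your exponent checks are sound.
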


\begin{proof}
By \eqref{eq:ggrowth}, we have the pointwise bound
\begin{align}
|F(\mathbf{z},\eta)|+|\nabla_x F(\mathbf{z},\eta)|\lesssim \(1+|\eta|^2\)\(|\phi(\mathbf{z})|+|\nabla_x \phi(\mathbf{z})| +|\eta|\)|\eta|\(|\eta|+|\nabla_x \eta|\).
\end{align}
Using this, we obtain  the conclusion by H\"older and Sobolev estimates.
\end{proof}

We set
\begin{align}
\mathcal{H}_c[\mathbf{z}]:=\{v\in L^2\ |\ \forall \widetilde{\mathbf{z}}\in \C^N,\ \<\im v, D_{\mathbf{z}}\phi(\mathbf{z})\widetilde{\mathbf{z}}\>=0\}.
\end{align}
Notice that for $u\in H^1$, $\eta(u)\in \mathcal{H}_c[\mathbf{z}(u)]\cap H^1$.
Following Gustafson, Nakanishi and Tsai \cite{GNT}, we can construct an inverse of $P_c$ on $\mathcal{H}_c[\mathbf{z}]$.

\begin{lemma}\label{lem:GNTR}
There exists $\delta>0$ s.t.\ there exists $\{a_{jA}\}_{j=1,\cdots,N,A=\mathrm{R},\mathrm{I}}\in C^\infty(\mathcal{B}_{\C^N(0,\delta)}, \Sigma^1)$ s.t.
\begin{align}\label{eq:estRa}
\|a_{jA}(\mathbf{z})\|_{\Sigma^1}\lesssim \|\mathbf{z}\|^2, \ j=1,\cdots,N,\ A=\mathrm{R},\mathrm{I}
\end{align}
and
\begin{align}
R[\mathbf{z}]:=\mathrm{Id}-\sum_{j=1}^N\(\<\cdot,a_{j\mathrm{R}}(\mathbf{z})\>\phi_j+\<\cdot,a_{j\mathrm{R}}(\mathbf{z})\>\im \phi_j\),
\end{align}
satisfies $\left.R[\mathbf{z}]P_c\right|_{\mathcal{H}_c[\mathbf{z}]}=\left.\mathrm{Id}\right|_{\mathcal{H}_c[\mathbf{z}]}$, $\left.P_cR[\mathbf{z}] \right|_{P_c L^2}=\left.\mathrm{Id} \right|_{P_c L^2}$.
\end{lemma}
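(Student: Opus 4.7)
The plan is to read off $a_{jR}(\mathbf{z})$ and $a_{jI}(\mathbf{z})$ from the requirement that $R[\mathbf{z}]v$ lie in $\mathcal{H}_c[\mathbf{z}]$ whenever $v\in P_cL^2$. Note that the identity $P_c R[\mathbf{z}]|_{P_cL^2} = \mathrm{Id}$ is automatic, since $P_c\phi_j = P_c(\im\phi_j) = 0$. So everything reduces to choosing the $a_{jA}(\mathbf{z})$ so that $R[\mathbf{z}]$ sends $P_cL^2$ into $\mathcal{H}_c[\mathbf{z}]$.

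Imposing the $2N$ conditions $\<\im R[\mathbf{z}]v, D_{\mathbf{z}}\phi(\mathbf{z})\widetilde{\mathbf{z}}\>=0$ for $\widetilde{\mathbf{z}} \in \{\mathbf{e}_k, \im\mathbf{e}_k\}_{k=1}^N$ yields a $2N\times 2N$ linear system for the scalars $\{\<v, a_{jR}(\mathbf{z})\>, \<v, a_{jI}(\mathbf{z})\>\}_{j=1}^N$, with coefficients involving $\<\phi_j, D_{\mathbf{z}}\phi(\mathbf{z})\widetilde{\mathbf{z}}\>$ and $\<\im\phi_j, D_{\mathbf{z}}\phi(\mathbf{z})\widetilde{\mathbf{z}}\>$, and right-hand side $\<\im v, D_{\mathbf{z}}\phi(\mathbf{z})\widetilde{\mathbf{z}}\>$. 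At $\mathbf{z}=0$, using $D_{\mathbf{z}}\phi(0)\mathbf{e}_k = \phi_k$, $D_{\mathbf{z}}\phi(0)\im\mathbf{e}_k = \im\phi_k$, and the real $L^2$-orthonormality of the $\phi_j$, this matrix becomes diagonal with entries $\pm 1$, hence invertible. Smoothness of $\mathbf{z}\mapsto D_{\mathbf{z}}\phi(\mathbf{z})$ from Proposition \ref{prop:rp} extends invertibility and smooth dependence to a small neighborhood, and Riesz representation (followed by projection onto $P_cL^2$) provides explicit $a_{jA}(\mathbf{z})\in \Sigma^\infty$ depending smoothly on $\mathbf{z}$.

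For the estimate $\|a_{jA}(\mathbf{z})\|_{\Sigma^1} \lesssim \|\mathbf{z}\|^2$, the key is a cancellation: for $v\in P_cL^2$ the right-hand side $\<\im v, D_{\mathbf{z}}\phi(\mathbf{z})\widetilde{\mathbf{z}}\>$ vanishes when $D_{\mathbf{z}}\phi(\mathbf{z})$ is replaced by $D_{\mathbf{z}}\phi(0)$, since $v\perp \phi_k$ and $v\perp\im\phi_k$; so only $D_{\mathbf{z}}\phi(\mathbf{z}) - D_{\mathbf{z}}\phi(0)$ effectively enters. From $\phi(\mathbf{z}) = \mathbf{z}\cdot\boldsymbol{\phi} + \sum_{\mathbf{m}\in\mathbf{NR}_1}\mathbf{z}^{\mathbf{m}}\psi_{\mathbf{m}}(|\mathbf{z}|^2)$, the normalization $\psi_{\mathbf{e}_j}(0) = 0$ (which kills the would-be constant contribution from $\|\mathbf{m}\|=1$), and $\|\mathbf{m}\|\geq 2$ for the remaining indices (so $\mathbf{z}^{\mathbf{m}}$ is already quadratic), this difference is $O(\|\mathbf{z}\|^2)$ in $\Sigma^1$, yielding the claimed bound.

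Finally, the identity $R[\mathbf{z}]P_c|_{\mathcal{H}_c[\mathbf{z}]} = \mathrm{Id}$ follows because both $\eta$ and $R[\mathbf{z}]P_c\eta$ lie in $\mathcal{H}_c[\mathbf{z}]$ (by hypothesis and by construction, respectively) and share the same continuous projection $P_c\eta$, while invertibility of the linear system above ensures that $P_c:\mathcal{H}_c[\mathbf{z}]\to P_cL^2$ is bijective for $\|\mathbf{z}\|$ small. The main task is linear-algebraic bookkeeping of the $2N\times 2N$ system together with careful tracking of the $\Sigma^1$-norms; no genuinely new analytic ingredient beyond Proposition \ref{prop:rp} is required, which is consistent with the reference to Gustafson--Nakanishi--Tsai preceding the statement.
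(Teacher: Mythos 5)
Your argument is correct and is essentially the standard Gustafson--Nakanishi--Tsai construction that the paper itself does not reproduce (it only cites \cite{CM15APDE} for the proof): solve the $2N\times 2N$ system forced by the orthogonality conditions, observe that for $v\in P_cL^2$ the right-hand side only involves $D_{\mathbf z}\phi(\mathbf z)-D_{\mathbf z}\phi(0)=O(\|\mathbf z\|^2)$ in $\Sigma^1$, and use the same invertible matrix to get injectivity of $P_c$ on $\mathcal H_c[\mathbf z]$. The only cosmetic points are that the matrix at $\mathbf z=0$ is a signed permutation (block anti-diagonal) rather than literally diagonal, and that the displayed formula for $R[\mathbf z]$ in the statement has a typo ($a_{j\mathrm R}$ appearing twice where the second occurrence should be $a_{j\mathrm I}$), which your construction implicitly corrects.
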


\begin{proof}
A proof is in \cite{CM15APDE}.
\end{proof}

We set $\widetilde{\eta}=P_c\eta$.
By Lemma \ref{lem:GNTR}, we have $\eta=R[z]\widetilde{\eta}$   and  $\|\eta\|_{\mathrm{Stz}^1}\sim \|\widetilde{\eta}\|_{\mathrm{Stz}^1}$.
Applying $P_c$ to \eqref{eq:modnls}, we have
\begin{align}\label{eq:tildeeta}
\im \partial_t \widetilde{\eta}=&H\widetilde{\eta}-\im P_c D_{\mathbf{z}}\phi(\mathbf{z})\(\partial_t \mathbf{z}+\im \boldsymbol{\varpi}(|\mathbf{z}|^2)\mathbf{z}\)+\sum_{\mathbf{m}\in \mathbf{R}_{\mathrm{min}}}\mathbf{z}^{\mathbf{m}}P_c G_{\mathbf{m}}\\& +P_c\mathcal{R}(\mathbf{z})+P_cF(\mathbf{z},\eta)+P_c\(H[\mathbf{z}]-H\)\eta.\nonumber
\end{align}

\begin{lemma}\label{est_eta}
Under the assumption of Proposition \ref{prop:mainbounds}, we have
\begin{align}\label{est_eta1}
\|\eta\|_{\mathrm{Stz}^1(I)}\lesssim
\epsilon_0+C(C_0)\epsilon_0^3  + \sum_{\mathbf{m}\in \mathbf{R}_{\mathrm{min}}}\|\mathbf{z}^{\mathbf{m}}\|_{L^2(I)}.
\end{align}
\end{lemma}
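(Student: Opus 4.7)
The strategy is to apply Yajima's Strichartz estimates \eqref{strich_est} to \eqref{eq:tildeeta} with initial datum $\widetilde{\eta}(0)=P_c\eta(0)$, estimate each of the six forcing terms on the right of \eqref{eq:tildeeta} in the dual norm $\mathrm{Stz}^{*1}(I)$, and then pass back to $\eta$ via $\eta=R[\mathbf{z}]\widetilde{\eta}$ from Lemma~\ref{lem:GNTR}. Since $R[\mathbf{z}]=\mathrm{Id}+O(\|\mathbf{z}\|^2)$ by \eqref{eq:estRa}, it is an $O(\epsilon_0^2)$ perturbation of the identity on $\mathrm{Stz}^1(I)$, so $\|\eta\|_{\mathrm{Stz}^1(I)}$ and $\|\widetilde{\eta}\|_{\mathrm{Stz}^1(I)}$ differ by quantities absorbable into $C(C_0)\epsilon_0^3$. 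The contribution of the linear evolution is bounded by $\|\widetilde{\eta}(0)\|_{H^1}\lesssim\|\eta(0)\|_{H^1}\lesssim\epsilon_0$, which accounts for the first term in \eqref{est_eta1}.

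For the forcing terms of \eqref{eq:tildeeta}: the source $\sum_{\mathbf{m}\in\mathbf{R}_{\mathrm{min}}}\mathbf{z}^{\mathbf{m}}P_cG_{\mathbf{m}}$ is placed in $L^2_tW^{1,6/5}$ by pulling out $\|P_cG_{\mathbf{m}}\|_{W^{1,6/5}}\lesssim 1$ (since $G_{\mathbf{m}}\in\Sigma^\infty$) and keeping $\|\mathbf{z}^{\mathbf{m}}\|_{L^2(I)}$, producing exactly the sum kept in \eqref{est_eta1}. The remainder $\mathcal R(\mathbf{z})$ is handled by \eqref{est:R}, which trades an $\epsilon_0^2$ for a factor of $\sum_{\mathbf{m}\in\mathbf{R}_{\mathrm{min}}}|\mathbf{z}^{\mathbf{m}}|$ and is absorbed. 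The nonlinear remainder $P_cF(\mathbf{z},\eta)$ is directly controlled by Lemma~\ref{lem:estF}, yielding a contribution of $C_0\epsilon_0^3$. The localized perturbation $P_c(H[\mathbf{z}]-H)\eta$ acts by multiplication by a function pointwise dominated by $C|\phi(\mathbf{z})|^2\lesssim\|\mathbf{z}\|^2\sum_j\phi_j^2$; using the exponential decay of $\phi_j$ to place $\sum_j\phi_j^2\in L^{3/2}\cap W^{1,3/2}$ and H\"older against the Sobolev component $\eta\in L^2_tW^{1,6}$ of the Strichartz norm, we obtain $\|P_c(H[\mathbf{z}]-H)\eta\|_{L^2_tW^{1,6/5}}\lesssim\epsilon_0^2\|\eta\|_{\mathrm{Stz}^1(I)}\lesssim C_0\epsilon_0^3$.

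The crucial term is the modulation error $P_cD_{\mathbf{z}}\phi(\mathbf{z})\bigl(\partial_t\mathbf{z}+\im\boldsymbol{\varpi}(|\mathbf{z}|^2)\mathbf{z}\bigr)$, and this is where one could naively lose. The factor $\partial_t\mathbf{z}+\im\boldsymbol{\varpi}(|\mathbf{z}|^2)\mathbf{z}$ is only $O(C_0\epsilon_0)$ in $L^2_t$, so to reach $\epsilon_0^3$ one must extract two powers of $\|\mathbf{z}\|$ from $D_{\mathbf{z}}\phi(\mathbf{z})$ composed with $P_c$. The gain comes directly from the structure of the refined profile \eqref{eq:Phianz}: the $\mathbf{z}$-independent linear part $D_{\mathbf{z}}\phi(\mathbf{z})\widetilde{\mathbf{z}}=\widetilde{\mathbf{z}}\cdot\boldsymbol{\phi}+\cdots$ lies in the discrete spectral subspace and is annihilated by $P_c$, while the remaining contribution comes from $\sum_{\mathbf{m}\in\mathbf{NR}_1\setminus\mathbf{NR}_0}\mathbf{z}^{\mathbf{m}}\psi_{\mathbf{m}}(|\mathbf{z}|^2)$. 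A parity argument shows that for $\mathbf{m}\in\mathbf{NR}_1\setminus\mathbf{NR}_0$ one has $\|\mathbf{m}\|\geq 3$ (since $\sum m_j=1$ forces $\|\mathbf{m}\|$ odd, and $\|\mathbf{m}\|=1$ would put $\mathbf{m}$ in $\mathbf{NR}_0$), so
\begin{align*}
\|P_cD_{\mathbf{z}}\phi(\mathbf{z})\widetilde{\mathbf{z}}\|_{\Sigma^1}\lesssim\|\mathbf{z}\|^2\,\|\widetilde{\mathbf{z}}\|.
\end{align*}
This gives $\|P_cD_{\mathbf{z}}\phi(\mathbf{z})(\partial_t\mathbf{z}+\im\boldsymbol{\varpi}(|\mathbf{z}|^2)\mathbf{z})\|_{L^2_t\Sigma^1}\lesssim\epsilon_0^2\cdot C_0\epsilon_0=C_0\epsilon_0^3$. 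Assembling the six estimates via \eqref{strich_est} and using Lemma~\ref{lem:GNTR} to transfer to $\eta$ yields \eqref{est_eta1}.
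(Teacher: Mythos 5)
Your proposal is correct and follows essentially the same route as the paper: Strichartz applied to the equation for $\widetilde\eta=P_c\eta$, term-by-term dual-norm estimates, and the key fact $\|P_cD_{\mathbf z}\phi(\mathbf z)\|_{\Sigma^1}=O(\|\mathbf z\|^2)$ to upgrade the modulation term to $C_0\epsilon_0^3$; the paper simply asserts this last fact, whereas you justify it (note only that the terms $z_j\psi_{\mathbf e_j}(|\mathbf z|^2)$ with $\mathbf m\in\mathbf{NR}_0$ also contribute after $P_c$, and are $O(\|\mathbf z\|^2)$ because $\psi_{\mathbf e_j}(0)=0$, not by the $\|\mathbf m\|\ge 3$ parity argument).
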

\proof Obviously, from $\|\eta\|_{\mathrm{Stz}^1}\sim \|\widetilde{\eta}\|_{\mathrm{Stz}^1}$ it is enough  to bound the latter. By
 Strichartz estimates \eqref{strich_est} and Lemma \ref{lem:estF} we easily obtain
\begin{align*}
\|\widetilde{\eta}\|_{\mathrm{Stz}^1(I)}\lesssim
\epsilon_0+C(C_0)\epsilon_0^3 + \|   P_c D_{\mathbf{z}}\phi(\mathbf{z})\(   \partial_t \mathbf{z}+\im \boldsymbol{\varpi}(|\mathbf{z}|^2)\mathbf{z}\) \|_{L^2(I)}+ \sum_{\mathbf{m}\in \mathbf{R}_{\mathrm{min}}}\|\mathbf{z}^{\mathbf{m}}\|_{L^2(I)}.
\end{align*}
Using the fact that   $\|P_c D_{\mathbf{z}}\phi(\mathbf{z})\|_{\Sigma^1}=O\(\|\mathbf{z}\| ^2\) $, we obtain \eqref{est_eta1}.

\qed

We set $Z(\mathbf{z}):=- \sum_{\mathbf{m}\in \mathbf{R}_{\min}} \mathbf{z}^{\mathbf{m}}R_+(\mathbf{m}\cdot \boldsymbol{\omega})P_c G_{\mathbf{m}}$ and $\xi :=\widetilde{\eta}+Z$, where $R_+(\lambda):=(H-\lambda-\im 0)^{-1}$.
Using the identity
\begin{align}\label{eq:difzm}
\( D_{\mathbf{z}}\mathbf{z}^{\mathbf{m}}\)(\im \boldsymbol{\omega}\mathbf{z})=\im \mathbf{m}\cdot \boldsymbol{\omega}\mathbf{z}
\end{align}
with  $\boldsymbol{\omega}\mathbf{z}:=(\omega_1z_1,\cdots,\omega_Nz_N)$, we see that  $Z$ satisfies
\begin{align}\label{eq:Z}
&-\im \partial_t Z(\mathbf{z}) + HZ(\mathbf{z}) = \sum_{\mathbf{m}\in \mathbf{R}_{\mathrm{min}}}\mathbf{z}^{\mathbf{m}}P_cG_{\mathbf{m}}+\mathcal{R}_Z(\mathbf{z}),
\end{align}
where
\begin{align}
 \mathcal{R}_Z(\mathbf{z})=\im \sum_{\mathbf{m}\in \mathbf{R}_{\mathrm{min}}}  D_{\mathbf{z}}  \(\mathbf{z}^{\mathbf{m}}\) \left [  \(\partial_t \mathbf{z}+\im \boldsymbol{\varpi}(|z|^2) \mathbf{z}\) +         \(  \im \boldsymbol{\omega}\mathbf{z}-\im \boldsymbol{\varpi}(|z|^2) \mathbf{z} \) \right ]R_+(\mathbf{m}\cdot \boldsymbol{\omega})P_c G_{\mathbf{m}}.\nonumber
\end{align}
Substituting $\widetilde{\eta}=\xi-Z(\mathbf{z})$ into \eqref{eq:tildeeta}, we obtain
\begin{align}\label{eq:xi}
\im \partial_t \xi=&H\xi-\im P_c D_{\mathbf{z}}\phi(\mathbf{z})   \(\partial_t \mathbf{z}+\im \boldsymbol{\varpi}(|\mathbf{z}|^2)\mathbf{z}\) +P_c\mathcal{R}(\mathbf{z})+P_cF(\mathbf{z},\eta)+P_c\(H[\mathbf{z}]-H\)\eta+\mathcal{R}_{Z}(\mathbf{z}).
\end{align}

\begin{lemma}\label{lem:estxi}
Under the assumption of Proposition \ref{prop:mainbounds}, we have
\begin{align*}
\|\xi\|_{L^2\Sigma^{0-}(I)}\lesssim \epsilon_0+C_0\epsilon_0^3.
\end{align*}
\end{lemma}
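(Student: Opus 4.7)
The strategy is to apply Duhamel's formula to equation \eqref{eq:xi} for $\xi$ and invoke Kato smoothing. Crucially, one cannot bound $\widetilde{\eta}$ and $Z$ separately in $L^2_t\Sigma^{0-}$, since $\|Z\|_{L^2_t\Sigma^{0-}}\lesssim\sum_{\mathbf{m}}\|\mathbf{z}^{\mathbf{m}}\|_{L^2_t}$ is only of size $C_0\epsilon_0$: the bound hinges on the cancellation built into $\xi$, namely that the large source $\sum_{\mathbf{m}}\mathbf{z}^{\mathbf{m}}P_c G_{\mathbf{m}}$ has been removed from \eqref{eq:xi}. I combine the Kato smoothing (valid under Assumption \ref{ass:nonres}),
\begin{equation*}
\|e^{-\im tH}P_c v\|_{L^2_t\Sigma^{0-}}\lesssim\|v\|_{L^2},\qquad \Bigl\|\int_0^t e^{-\im(t-s)H}P_c F(s)\,ds\Bigr\|_{L^2_t\Sigma^{0-}}\lesssim\|F\|_{L^2_t\Sigma^{0+}+L^1_t L^2},
\end{equation*}
with the embedding $\mathrm{Stz}^1(I)\hookrightarrow L^2_t\Sigma^{0-}$ (a consequence of $W^{1,6}(\R^3)\hookrightarrow L^\infty$ and $\cosh^{-1}(\gamma_0\cdot)\in L^2(\R^3)$), reducing the proof to bounding $e^{-\im tH}\xi(0)$ and the Duhamel integrals of the five source terms.

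For the initial data $\xi(0)=\widetilde{\eta}(0)+Z(\mathbf{z}(0))$: the first piece has $\|\widetilde{\eta}(0)\|_{L^2}\lesssim\epsilon_0$, giving the leading $\epsilon_0$ by standard Kato smoothing. For the second, every $\mathbf{m}\in\mathbf{R}_{\mathrm{min}}$ has $\sum\mathbf{m}=1$ with at least one negative component, so $\|\mathbf{m}\|\geq 3$ and $|\mathbf{z}(0)^{\mathbf{m}}|\lesssim\epsilon_0^3$. Combined with an LAP-refined smoothing $\|e^{-\im tH}R_+(\lambda)P_c G\|_{L^2_t\Sigma^{0-}}\lesssim\|G\|_{L^2}$ (for $\lambda>0$ in the continuous spectrum), this piece contributes $\lesssim\epsilon_0^3$.

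Each of the five source terms in \eqref{eq:xi} is bounded by $C_0\epsilon_0^3$. For (a) $P_c D_{\mathbf{z}}\phi(\mathbf{z})(\partial_t\mathbf{z}+\im\boldsymbol{\varpi}(|\mathbf{z}|^2)\mathbf{z})$, one uses $P_c\boldsymbol{\phi}=0$ and that $\|\mathbf{m}\|\geq 3$ for every $\mathbf{m}\in\mathbf{NR}_1\setminus\mathbf{NR}_0$, yielding $\phi(\mathbf{z})=\mathbf{z}\cdot\boldsymbol{\phi}+O(\|\mathbf{z}\|^3)$ and therefore $\|P_c D_{\mathbf{z}}\phi(\mathbf{z})\|_{\mathrm{op}}=O(\|\mathbf{z}\|^2)$; combined with the bootstrap this gives an $L^2_t\Sigma^{0+}$ bound $\lesssim\epsilon_0^2\cdot C_0\epsilon_0$. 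For (b) $P_c\mathcal{R}(\mathbf{z})$ apply \eqref{est:R} directly; for (c) $P_c F(\mathbf{z},\eta)$ use Lemma \ref{lem:estF} with the $\mathrm{Stz}^1\hookrightarrow L^2_t\Sigma^{0-}$ embedding; for (d) $P_c(H[\mathbf{z}]-H)\eta$ use the pointwise bound $|V[\mathbf{z}]f|\lesssim\|\mathbf{z}\|^2 e^{-2\gamma_0|x|}|f|$ arising from the exponential decay of $\phi(\mathbf{z})$, combined with $\eta\in L^2_t L^6$ via Strichartz. For (e) $\mathcal{R}_Z$ I split into the two bracketed pieces: the one carrying $\partial_t\mathbf{z}+\im\boldsymbol{\varpi}\mathbf{z}$ uses the $L^2_t$ bootstrap, and the one carrying $(\boldsymbol{\omega}-\boldsymbol{\varpi}(|\mathbf{z}|^2))\mathbf{z}$ uses the algebraic identity
\begin{equation*}
\bigl|D_{\mathbf{z}}(\mathbf{z}^{\mathbf{m}})\bigl((\boldsymbol{\omega}-\boldsymbol{\varpi}(|\mathbf{z}|^2))\mathbf{z}\bigr)\bigr|\lesssim|\mathbf{z}^{\mathbf{m}}|\,\|\mathbf{z}\|^2,
\end{equation*}
since the factor $z_j$ in the perturbation absorbs the $z_j^{-1}$ singularity of $\partial_{z_j}\mathbf{z}^{\mathbf{m}}$; inserting the bootstrap on $\|\mathbf{z}^{\mathbf{m}}\|_{L^2_t}$ yields $\lesssim C_0\epsilon_0^3$.

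The main obstacle lies with the terms whose spatial factor $R_+(\mathbf{m}\cdot\boldsymbol{\omega})P_c G_{\mathbf{m}}$ is only in $\Sigma^{0-}$, not $\Sigma^{0+}$: namely, the initial-data contribution $e^{-\im tH}Z(\mathbf{z}(0))$ and the Duhamel integral of $\mathcal{R}_Z$. The standard inhomogeneous Kato smoothing, which asks for a source in $\Sigma^{0+}$ or $L^2$, does not apply directly to them. Their treatment rests on the LAP-refined smoothing stated above, obtained via the commutation of $e^{-\im tH}$ with $R_+(\lambda)$ and the distorted Fourier representation, and using that $\lambda=\mathbf{m}\cdot\boldsymbol{\omega}>0$ is bounded away from zero so that the associated singularity is integrable against the exponential weight.
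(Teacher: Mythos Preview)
Your overall scheme coincides with the paper's: write Duhamel for $\xi$, control the ``easy'' sources $(a)$--$(d)$ by Strichartz/smoothing (the paper does this via $\|\cdot\|_{L^2\Sigma^{0-}}\lesssim\|\cdot\|_{\mathrm{Stz}^0}$ and \eqref{strich_est}, which is equivalent to your Kato smoothing), and treat the two terms carrying the outgoing resolvent $R_+(\mathbf{m}\cdot\boldsymbol{\omega})P_cG_{\mathbf{m}}$ separately via an outgoing/LAP estimate. Two points need to be sharpened.

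First, the LAP-refined smoothing cannot hold with $\|G\|_{L^2}$ on the right; the limiting absorption principle requires a weighted input. The paper states it as the pointwise-in-time estimate \eqref{eq:outgoing},
\[
\|e^{-\im tH}R_+(\mathbf{m}\cdot\boldsymbol{\omega})P_cf\|_{\Sigma^{0-}}\lesssim\<t\>^{-3/2}\|f\|_{\Sigma^0},
\]
and your $L^2_t$ version is a consequence of this (with $\|f\|_{\Sigma^0}$, harmless since $G_{\mathbf{m}}\in\Sigma^\infty$).

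Second, and more importantly, the homogeneous $L^2_t$ bound you state is \emph{not} enough for the Duhamel integral of $\mathcal{R}_Z$. That integral has the convolution form $(a*h)(t)$ with $h(\tau)=e^{-\im\tau H}R_+(\mathbf{m}\cdot\boldsymbol{\omega})P_cG_{\mathbf{m}}$ and scalar $a(s)$; if one only knows $h\in L^2_\tau\Sigma^{0-}$, Young's inequality forces $\|a\|_{L^1_t}$, which is uncontrolled on $[0,T]$. The paper closes this by using the full pointwise decay $\|h(\tau)\|_{\Sigma^{0-}}\lesssim\<\tau\>^{-3/2}$, so that $h\in L^1_\tau\Sigma^{0-}$ and Young gives
\[
\Bigl\|\int_0^t a(s)\,e^{-\im(t-s)H}R_+(\mathbf{m}\cdot\boldsymbol{\omega})P_cG_{\mathbf{m}}\,ds\Bigr\|_{L^2_t\Sigma^{0-}}\lesssim\|a\|_{L^2_t}\,\|G_{\mathbf{m}}\|_{\Sigma^0},
\]
after which your bounds $\|a\|_{L^2_t}\lesssim C_0\epsilon_0^3$ apply. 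So the missing ingredient is precisely \eqref{eq:outgoing} together with Young's convolution inequality; once you insert this, your argument is complete and matches the paper's.
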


\begin{proof}
By $\|\cdot\|_{L^2\Sigma^{0-}}\lesssim \|\cdot\|_{\mathrm{Stz}^0}$ and Strichartz estimates \eqref{strich_est}, we have
\begin{align}\label{eq:estxi1}
\|\xi\|_{L^2\Sigma^{0-}}&\leq  \|\widetilde{\eta}(0)\|_{L^2}+\|e^{-\im t H}Z(\mathbf{z}(0))\|_{L^2\Sigma^{0-}}+\|\int_0^t e^{-\im (t-s)H}\mathcal{R}_Z(\mathbf{z}(u(s)))\,ds\|_{L^2\Sigma^{0-}} \\& +
\|\im P_c D_{\mathbf{z}}\phi(\mathbf{z})\(    \dot {\mathbf{z}}+\im \boldsymbol{\varpi}(|\mathbf{z}|^2)\mathbf{z}\)  -\mathcal{R}(\mathbf{z})-F(\mathbf{z},\eta)-\(H[\mathbf{z}]-H\)\eta\|_{\mathrm{Stz}^{*0}}\nonumber
\end{align}
where $\mathbf{z}(t)=\mathbf{z}(u(t))$.
One can bound the contribution of the 2nd line of \eqref{eq:estxi1} by $\lesssim C(C_0)\epsilon_0^3 $ using, as in Lemma \ref{est_eta}, $\|P_c D_{\mathbf{z}}\phi(\mathbf{z})\|_{\Sigma^1}=O\(\|\mathbf{z}\| ^2\) $ and $ |  D_{\mathbf{z}}  \(\mathbf{z}^{\mathbf{m}}\)  \(  \im \boldsymbol{\omega}\mathbf{z}-\im \boldsymbol{\varpi}(|z|^2) \mathbf{z} \)      |  = O\(\|\mathbf{z}\| ^2\) |\mathbf{z}^{\mathbf{m}}|      $  by \eqref{eq:difzm}.
Similarly, the first term in the r.h.s.\ of \eqref{eq:estxi1} can be bounded by $\lesssim
\epsilon_0$.
For the 2nd and 3rd term in the r.h.s.\ of \eqref{eq:estxi1}, we  use the estimate
\begin{align}\label{eq:outgoing}
\|e^{-\im t H}R_+(\mathbf{m}\cdot \boldsymbol{\omega})P_cf\|_{\Sigma^{0-}}\lesssim\<t\>^{-3/2}\|f\|_{\Sigma^0}.
\end{align}
Using \eqref{eq:outgoing}, we have
\begin{align*}
\|e^{-\im t H}Z(\mathbf{z}(0))\|_{L^2\Sigma^{0-}(I)}\lesssim \sum_{\mathbf{m}\in \mathbf{R}_{\mathrm{min}}}|\mathbf{z}(0)^{\mathbf{m}}|\|\<t\>^{-3/2}\|_{L^2}\|G_{\mathbf{m}}\|_{\Sigma^0}\lesssim \epsilon_0,
\end{align*}
and
\begin{align*}
&\|\int_0^t e^{-\im (t-s)H}\mathcal{R}_Z(\mathbf{z}(u(s)))\,ds\|_{L^2\Sigma^{0-}(I)}\\&\leq \sum_{\mathbf{m}\in \mathbf{R}_{\mathrm{min}}}\| \int_0^t \left|D_{\mathbf{z}}\(\mathbf{z}^{\mathbf{m}}(s)\)\(\partial_t \mathbf{z}(s)+\im \boldsymbol{\omega}\mathbf{z}(s)\)\right| \|e^{-\im (t-s)H}R_+(\mathbf{m}\cdot \boldsymbol{\omega})P_cG_{\mathbf{m}}\|_{\Sigma^{0-}}\,ds \|_{L^2(I)}\\&
\lesssim \sum_{\mathbf{m}\in \mathbf{R}_{\mathrm{min}}} \| \epsilon_0^2 \int_0^t\( |\partial_t \mathbf{z}(s)+\im \boldsymbol{\varpi}(|\mathbf{z}|^2(s))\mathbf{z}(s)| + |\mathbf{z}(s)^{\mathbf{m}}|\) \<t-s\>^{-3/2}\|_{L^2(I)}\lesssim C(C_0)\epsilon_0^3,
\end{align*}
where we have used $\partial_t \mathbf{z}+\im \boldsymbol{\omega}\mathbf{z}=\partial_t \mathbf{z}+\im \boldsymbol{\omega}(|\mathbf{z}|^2)\mathbf{z} -\im\( \boldsymbol{\omega}(|\mathbf{z}|^2)\mathbf{z}- \boldsymbol{\omega}\mathbf{z}\)$ and \eqref{eq:difzm} in the 2nd inequality and Young's convolution inequality in the 3rd inequality.
Therefore, we have the conclusion.
\end{proof}

\subsection{Estimate of discrete variables}

We next estimate the quantities $\|\partial_t \mathbf{z}+\im \boldsymbol{\varpi}(|\mathbf{z}|^2)\mathbf{z}\|_{L^2}$ and $\sum_{\mathbf{m}\in \mathbf{R}_{\mathrm{min}}}\|\mathbf{z}^{\mathbf{m}}\|_{L^2}$.
To do so, we first compute the inner product  $\<\eqref{eq:modnls},D_{\mathbf{z}}\phi(\mathbf{z})\widetilde{\mathbf{z}}\>$ for any given $\widetilde{\mathbf{z}}\in \C^N$.
First, notice that by $\eta\in \mathcal{H}_c[\mathbf{z}]$ we obtain the orthogonality relation
\begin{align*}
\<\im \partial_t \eta, D_{\mathbf{z}}\phi(\mathbf{z})\widetilde{\mathbf{z}}\>=-\<\im \eta, D_{\mathbf{z}}^2\phi(\mathbf{z})(\partial_t \mathbf{z},\widetilde{\mathbf{z}})\>.
\end{align*}
 Second, applying the inner product  $\< \eta , \cdot \>$  to equation \eqref{eq:rfderiv},   we have
\begin{align*}
\<H[\mathbf{z}]\eta,D_{\mathbf{z}}\phi(\mathbf{z})\widetilde{\mathbf{z}}\>=\<\im \eta,D_{\mathbf{z}}^2\phi(\mathbf{z})(\boldsymbol{\varpi}(|\mathbf{z}|^2)\mathbf{z},\widetilde{\mathbf{z}})\>+\sum_{\mathbf{m}\in \mathbf{R}_{\mathrm{min}}}\<\eta,\(D_{\mathbf{z}}\(\mathbf{z}^{\mathbf{m}}\)\widetilde{\mathbf{z}}\)G_{\mathbf{m}}\>+\<\eta,D_{\mathbf{z}}\mathcal{R}(\mathbf{z})\widetilde{\mathbf{z}}\>,
\end{align*}
where we exploited the selfadjointness of $H[\mathbf{z}]$  and the orthogonality in Lemma \ref{lem:lincor}.
Thus,  applying $\< \cdot  , D_{\mathbf{z}}\phi(\mathbf{z})\widetilde{\mathbf{z}}\>$ to  equation \eqref{eq:modnls} for $\eta$ and  using the last two equalities, we obtain
\begin{align}\label{eq:discfund}
  \<\im D_{\mathbf{z}}\phi(\mathbf{z})(\partial_t \mathbf{z}+\im \boldsymbol{\varpi}(|\mathbf{z}|^2)\mathbf{z}),D_{\mathbf{z}}\phi(\mathbf{z})\widetilde{\mathbf{z}}\> =\<\im \eta, D_{\mathbf{z}}^2\phi(\mathbf{z})\(\partial_t \mathbf{z}+\im \boldsymbol{\varpi}(|\mathbf{z}|^2)\mathbf{z},\widetilde{\mathbf{z}}\)\>
+\<\eta,D_{\mathbf{z}}\mathcal{R}(\mathbf{z})\widetilde{\mathbf{z}}\>\\ +\sum_{\mathbf{m}\in \mathbf{R}_{\mathrm{min}}}\<\eta,\(D_{\mathbf{z}}\(\mathbf{z}^{\mathbf{m}}\)\widetilde{\mathbf{z}}\)G_{\mathbf{m}}\>+\sum_{\mathbf{m}\in \mathbf{R}_{\mathrm{min}}}\<\mathbf{z}^{\mathbf{m}}G_{\mathbf{m}},D_{\mathbf{z}}\phi(\mathbf{z})\widetilde{\mathbf{z}}\>\nonumber \\ +\<\mathcal{R}(\mathbf{z}),D_{\mathbf{z}}\phi(\mathbf{z})\widetilde{\mathbf{z}}\>+\<F(\mathbf{z},\eta),D_{\mathbf{z}}\phi(\mathbf{z})\widetilde{\mathbf{z}}\>.\nonumber
\end{align}

Using $\widetilde{\mathbf{z}}=\mathbf{e}_j, \im \mathbf{e}_j$ we have the following.
\begin{lemma}\label{lem:zpres}
Under the assumption of Proposition \ref{prop:mainbounds}, we have
\begin{align}
\partial_t z_j+\im \varpi_j(|\mathbf{z}|^2)z_j=-\im \sum_{\mathbf{m}\in \mathbf{R}_{\mathrm{min}}}\mathbf{z}^{\mathbf{m}}\<G_{\mathbf{m}},\phi_j\>+r_j(\mathbf{z},\eta),\label{equat_z}
\end{align}
where $r_j(\mathbf{z},\eta)$ satisfies
\begin{align*}
\|r_j(\mathbf{z},\eta)\|_{L^2(I)}\lesssim C_0\epsilon_0^3.
\end{align*}
In particular, we have
\begin{align}
\|\partial_t \mathbf{z}+\im \boldsymbol{\varpi}(|\mathbf{z}|^2)\mathbf{z}\|_{L^2(I)}\lesssim \|\mathbf{z}^{\mathbf{m}}\|_{L^2(I)}+C_0\epsilon_0^3.\label{estimate_z}
\end{align}
\end{lemma}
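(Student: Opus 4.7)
The plan is to derive \eqref{equat_z} by pairing the master identity \eqref{eq:discfund} with $\widetilde{\mathbf{z}}=\mathbf{e}_j$ and with $\widetilde{\mathbf{z}}=\im \mathbf{e}_j$ and inverting the resulting $2\times 2$ real linear system for $(\mathrm{Re},\mathrm{Im})(\partial_t z_j+\im \varpi_j z_j)$. Proposition \ref{prop:rp}, together with the observation that every $\mathbf{m}\in \mathbf{NR}_1\setminus \mathbf{NR}_0$ has $\|\mathbf{m}\|\geq 3$ and $\psi_{\mathbf{e}_k}(0)=0$, gives $\phi(\mathbf{z})=\mathbf{z}\cdot\boldsymbol{\phi}+O(\|\mathbf{z}\|^3)$ in $\Sigma^s$; hence $D_\mathbf{z}\phi(\mathbf{z})\widetilde{\mathbf{z}}=\widetilde{\mathbf{z}}\cdot\boldsymbol{\phi}+O(\|\mathbf{z}\|^2)$ and $\|D_\mathbf{z}^2\phi(\mathbf{z})\|_{\Sigma^s}=O(\|\mathbf{z}\|)$. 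Using $L^2$-orthonormality of the $\phi_j$ and the real-valuedness of $G_\mathbf{m}$ (inherited from the recursive definition \eqref{eq:indefphigroot}--\eqref{eq:defG} because $H$ and $g^{(n)}(0)$ are real), the LHS of \eqref{eq:discfund} with $\widetilde{\mathbf{z}}=\mathbf{e}_j$ (resp.\ $\im \mathbf{e}_j$) is, to leading order, $-\mathrm{Im}(\partial_t z_j+\im \varpi_j z_j)$ (resp.\ $\mathrm{Re}(\partial_t z_j+\im \varpi_j z_j)$), while the contribution of $\sum_\mathbf{m}\langle\mathbf{z}^\mathbf{m} G_\mathbf{m},\cdot\rangle$ is $\sum_\mathbf{m} \mathrm{Re}(\mathbf{z}^\mathbf{m})\langle G_\mathbf{m},\phi_j\rangle$ (resp.\ $\sum_\mathbf{m}\mathrm{Im}(\mathbf{z}^\mathbf{m})\langle G_\mathbf{m},\phi_j\rangle$). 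Recombining yields the main term $-\im\sum_\mathbf{m}\mathbf{z}^\mathbf{m}\langle G_\mathbf{m},\phi_j\rangle$ of \eqref{equat_z}, with all higher-order contributions collected into $r_j$.

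It remains to bound $r_j$ in $L^2_t(I)$ by $C_0\epsilon_0^3$. Most error terms carry an explicit $O(\|\mathbf{z}\|^2)$ factor multiplying either $\partial_t\mathbf{z}+\im\boldsymbol{\varpi}\mathbf{z}$ or an $\mathbf{R}_{\min}$-monomial $\mathbf{z}^\mathbf{m}$; since the bootstrap hypothesis \eqref{Strichartzradiation} controls both of these in $L^2_t$ by $C_0\epsilon_0$ and provides $\|\mathbf{z}\|_{L^\infty_t}\lesssim\epsilon_0$, each such term is bounded by $\epsilon_0^2\cdot C_0\epsilon_0=C_0\epsilon_0^3$. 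The term $\langle\im\eta,D_\mathbf{z}^2\phi(\partial_t\mathbf{z}+\im\boldsymbol{\varpi}\mathbf{z},\widetilde{\mathbf{z}})\rangle$ is similarly handled via $\|\eta\|_{L^\infty L^2}\|D_\mathbf{z}^2\phi\|_{\Sigma^s}\|\partial_t\mathbf{z}+\im\boldsymbol{\varpi}\mathbf{z}\|_{L^2_t}\lesssim\epsilon_0\cdot\epsilon_0\cdot C_0\epsilon_0$, and the contributions of $\langle\mathcal{R}(\mathbf{z}),D_\mathbf{z}\phi\widetilde{\mathbf{z}}\rangle$ and $\langle F(\mathbf{z},\eta),D_\mathbf{z}\phi\widetilde{\mathbf{z}}\rangle$ follow directly from \eqref{est:R} and Lemma \ref{lem:estF}.

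The genuinely delicate contributions are $\sum_\mathbf{m}\langle\eta,D_\mathbf{z}(\mathbf{z}^\mathbf{m})\widetilde{\mathbf{z}}\,G_\mathbf{m}\rangle$ and $\langle\eta,D_\mathbf{z}\mathcal{R}(\mathbf{z})\widetilde{\mathbf{z}}\rangle$, because $D_\mathbf{z}(\mathbf{z}^\mathbf{m})\widetilde{\mathbf{z}}$ is generally not itself an $\mathbf{R}_{\min}$-monomial and is therefore not directly controlled in $L^2_t$ by the bootstrap. The key is that \eqref{Strichartzradiation} gives $\|\eta\|_{L^2_t W^{1,6}_x}\lesssim C_0\epsilon_0$, and in three space dimensions $W^{1,6}_x\hookrightarrow L^\infty_x$, so $\|\eta\|_{L^2_t L^\infty_x}\lesssim C_0\epsilon_0$. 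Pairing against the rapidly decaying $G_\mathbf{m}\in L^1_x$ produces $\|\langle\eta,G_\mathbf{m}\rangle\|_{L^2_t}\lesssim C_0\epsilon_0$, and the combinatorial fact that $\|\mathbf{m}\|\geq 3$ for every $\mathbf{m}\in\mathbf{R}_{\min}$ yields $|D_\mathbf{z}(\mathbf{z}^\mathbf{m})\widetilde{\mathbf{z}}|\lesssim\|\mathbf{z}\|^{\|\mathbf{m}\|-1}\lesssim\epsilon_0^2$ pointwise in time; multiplying gives the required $L^2_t$-bound $\epsilon_0^2\cdot C_0\epsilon_0=C_0\epsilon_0^3$. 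The $\mathcal{R}$-term is controlled by an analogous pointwise estimate on $D_\mathbf{z}\mathcal{R}$ obtained by differentiating \eqref{est:R}. The bound \eqref{estimate_z} then follows by summing \eqref{equat_z} over $j$. The main obstacle is precisely this last pair of terms: a naive use of $\eta\in L^\infty_t L^2_x$ cannot produce $L^2_t$-integrability on an unbounded interval, and the crucial trade-off is to exchange $L^\infty_t L^2_x$ for $L^2_t L^\infty_x$ via Strichartz combined with three-dimensional Sobolev, then exploit $\|\mathbf{m}\|\geq 3$.
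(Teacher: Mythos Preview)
Your proof is correct and follows essentially the same route as the paper: pair \eqref{eq:discfund} against $\widetilde{\mathbf z}=\mathbf e_j,\im\mathbf e_j$, use $D_\mathbf{z}\phi(0)\widetilde{\mathbf z}=\widetilde{\mathbf z}\cdot\boldsymbol\phi$ together with the real-valuedness of $G_\mathbf{m}$ and $\phi_j$ to extract the main term, and estimate the remainder under the bootstrap hypothesis. The paper simply asserts $\|\widetilde r\|_{L^2(I)}\lesssim C_0\epsilon_0^3$ without isolating individual contributions, whereas you spell them out.

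One minor simplification: for the term $\sum_\mathbf{m}\langle\eta,D_\mathbf{z}(\mathbf z^\mathbf{m})\widetilde{\mathbf z}\,G_\mathbf{m}\rangle$ you invoke $W^{1,6}(\R^3)\hookrightarrow L^\infty$ and $G_\mathbf{m}\in L^1$, which is valid but heavier than needed. Since $G_\mathbf{m}\in\Sigma^\infty\subset L^{6/5}$, the plain H\"older pairing $|\langle\eta,G_\mathbf{m}\rangle|\le\|\eta\|_{L^6}\|G_\mathbf{m}\|_{L^{6/5}}$ already gives $\|\langle\eta,G_\mathbf{m}\rangle\|_{L^2_t}\lesssim\|\eta\|_{L^2_tL^6_x}\le\|\eta\|_{\mathrm{Stz}^1}\le C_0\epsilon_0$; combined with $|D_\mathbf{z}(\mathbf z^\mathbf{m})\widetilde{\mathbf z}|\lesssim\|\mathbf z\|^{\|\mathbf m\|-1}\lesssim\epsilon_0^2$ this yields the bound directly. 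Also, ``differentiating \eqref{est:R}'' is informal phrasing---the actual input is that the construction of $\mathcal R$ in \cite{CM20} gives the analogous bound for $D_\mathbf{z}\mathcal R$; this is what the paper is implicitly using as well.
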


\begin{proof}
First
since $D_{\mathbf{z}}\phi(0)\widetilde{\mathbf{z}}=\widetilde{\mathbf{z}}\cdot \boldsymbol{\phi}$, we have
\begin{align}\label{eq:zj1}
\<\im D_{\mathbf{z}}\phi(\mathbf{z})(\partial_t \mathbf{z}+\im \boldsymbol{\varpi}(|\mathbf{z}|^2)\mathbf{z}),D_{\mathbf{z}}\phi(\mathbf{z})\widetilde{\mathbf{z}}\>=\sum_{j=1}^N\Re(\im(\partial_t z_j+\im \varpi_j(|\mathbf{z}|^2)z_j)\overline{\widetilde{z}_j})+r(\mathbf{z},\widetilde{\mathbf{z}}),
\end{align}
where
\begin{align}\label{eq:zj2}
r(\mathbf{z},\widetilde{\mathbf{z}})=&\<\im \(D_{\mathbf{z}}\phi(\mathbf{z})-D_{\mathbf{z}}\phi(0)\)(\partial_t \mathbf{z}+\im \boldsymbol{\varpi}(|\mathbf{z}|^2)\mathbf{z}),D_{\mathbf{z}}\phi(\mathbf{z})\widetilde{\mathbf{z}}\>\\&+\<\im D_{\mathbf{z}}\phi(0)(\partial_t \mathbf{z}+\im \boldsymbol{\varpi}(|\mathbf{z}|^2)\mathbf{z}),\(D_{\mathbf{z}}\phi(\mathbf{z})-D_{\mathbf{z}}\phi(0)\)\widetilde{\mathbf{z}}\>.\nonumber
\end{align}
Since $\|D_{\mathbf{z}}\phi(\mathbf{z})-D_{\mathbf{z}}\phi(0)\|_{L^2}\lesssim |\mathbf{z}|^2\lesssim \epsilon_0^2$,   by the assumptions of Proposition \ref{prop:mainbounds}  we have
\begin{align}\label{eq:zj4}
\|r(\mathbf{z},\widetilde{\mathbf{z}})\|_{L^2(I)}\lesssim C_0\epsilon_0^3 \text{  for all }  \widetilde{\mathbf{z}}=\mathbf{e}_1,\im \mathbf{e}_1,\cdots,\mathbf{e}_N,\im \mathbf{e}_N.
\end{align}
Setting
\begin{align}\label{eq:zj3}
\widetilde{r}(\mathbf{z},\widetilde{\mathbf{z}},\eta):=&\<\im \eta, D_{\mathbf{z}}^2\phi(\mathbf{z})\(\partial_t \mathbf{z}+\im \boldsymbol{\varpi}(|\mathbf{z}|^2)\mathbf{z},\widetilde{\mathbf{z}}\)\>
+\<\eta,D_{\mathbf{z}}\mathcal{R}(\mathbf{z})\widetilde{\mathbf{z}}\>+\sum_{\mathbf{m}\in \mathbf{R}_{\mathrm{min}}}\<\eta,\(D_{\mathbf{z}}\(\mathbf{z}^{\mathbf{m}}\)\widetilde{\mathbf{z}}\)G_{\mathbf{m}}\>\\&+\sum_{\mathbf{m}\in \mathbf{R}_{\mathrm{min}}}\<\mathbf{z}^{\mathbf{m}}G_{\mathbf{m}},\(D_{\mathbf{z}}\phi(\mathbf{z})-D_{\mathbf{z}}\phi(0)\)\widetilde{\mathbf{z}}\>+\<\mathcal{R}(\mathbf{z}),D_{\mathbf{z}}\phi(\mathbf{z})\widetilde{\mathbf{z}}\>+\<F(\mathbf{z},\eta),D_{\mathbf{z}}\phi(\mathbf{z})\widetilde{\mathbf{z}}\>,\nonumber
\end{align}
  by the assumptions of Proposition \ref{prop:mainbounds}  we have
\begin{align}\label{eq:zj5}
\|\widetilde{r}(\mathbf{z},\widetilde{\mathbf{z}},\eta)\|_{L^2(I)}\lesssim C_0 \epsilon_0^3    \text{  for all }  \widetilde{\mathbf{z}}=\mathbf{e}_1,\im \mathbf{e}_1,\cdots,\mathbf{e}_N,\im \mathbf{e}_N.
\end{align}
Therefore, since $D\phi(0)\im^k\mathbf{e}_j=\im^k \phi_j$ ($k=0,1$), we have
\begin{align*}
-\mathrm{Im}\(\partial_t z_j +\im \varpi_j(|\mathbf{z}|^2)z_j\)&=\sum_{\mathbf{m}\in \mathbf{R}_{\mathrm{min}}}\<\mathbf{z}^{\mathbf{m}}G_{\mathbf{m}},\phi_j\>-r(\mathbf{z},\mathbf{e}_j)+\widetilde{r}(\mathbf{z},\mathbf{e}_j,\eta),\\
\mathrm{Re}\(\partial_t z_j + \im  \varpi_j(|\mathbf{z}|^2)z_j\)&=\sum_{\mathbf{m}\in \mathbf{R}_{\mathrm{min}}}\<\mathbf{z}^{\mathbf{m}}G_{\mathbf{m}},\im\phi_j\>-r(\mathbf{z},\im\mathbf{e}_j)+\widetilde{r}(\mathbf{z},\im\mathbf{e}_j,\eta).
\end{align*}
Since $G_{\mathbf{m}}$ (as can be seen from the proof in Sect. \cite{CM20}) and $\phi_j$ are $\R$-valued, we have
\begin{align*}
\partial_t z_j +  \im \varpi_j(|\mathbf{z}|^2)z_j=-\im \sum_{\mathbf{m}}\<G_{\mathbf{m}},\phi_j\> \mathbf{z}^{\mathbf{m}} -r(\mathbf{z},\im \mathbf{e}_j)+\im r(\mathbf{z},\mathbf{e}_j)+\widetilde{r}(\mathbf{z},\im \mathbf{e}_j,\eta)-\im \widetilde{r}(\mathbf{z},\mathbf{e}_j,\eta).
\end{align*}
Therefore, from
 \eqref{eq:zj4} and
  \eqref{eq:zj5}, we have the conclusion with
  $r_j(\mathbf{z},\eta)=-r(\mathbf{z},\im \mathbf{e}_j)+\im r(\mathbf{z},\mathbf{e}_j)+\widetilde{r}(\mathbf{z},\im \mathbf{e}_j,\eta)-\im \widetilde{r}(\mathbf{z},\mathbf{e}_j,\eta)$.
\end{proof}

Having estimated $\eta$ and $\partial_t \mathbf{z}+\im \boldsymbol{\varpi}(|\mathbf{z}|^2)\mathbf{z}$ in terms of
   $\sum_{\mathbf{m}\in \mathbf{R}_{\mathrm{min}}}\|\mathbf{z}^{\mathbf{m}}\|_{L^2(I)}$,  we need to estimate the latter quantity. Here we use the Fermi Golden Rule.

\begin{lemma}
Under the assumption of Proposition \ref{prop:mainbounds}, we have
\begin{align}
\sum_{\mathbf{m}\in \mathbf{R}_{\mathrm{min}} }\|\mathbf{z}^{\mathbf{m}}\|_{L^2}\lesssim \epsilon_0+(C_0\epsilon_0)\epsilon_0.\label{FGReq}
\end{align}
\end{lemma}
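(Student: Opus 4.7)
The plan is to exploit Assumption~\ref{ass:FGR} via the Sokhotski--Plemelj identity
\begin{align*}
\Im \int_{\R^3} (R_+(\lambda) P_c f)(x)\, \overline{f(x)}\, dx = -\pi \int_{|k|^2=\lambda}|\widehat f(k)|^2 \, dS,
\end{align*}
which at $\lambda = \mathbf{m}\cdot\boldsymbol{\omega}$ with $f = G_{\mathbf{m}}$ produces positive constants
\begin{align*}
\gamma_{\mathbf{m}} := -\Im \int_{\R^3}(R_+(\mathbf{m}\cdot\boldsymbol{\omega}) P_c G_{\mathbf{m}})\,\overline{G_{\mathbf{m}}}\, dx > 0 \qquad (\mathbf{m}\in \mathbf{R}_{\mathrm{min}}).
\end{align*}
These will play the role of positive coefficients of $\|\mathbf{z}^{\mathbf{m}}\|_{L^2(I)}^2$ in the quadratic form extracted below.

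I would construct an ``action'' functional whose time derivative exposes this quadratic form. Concretely, set
\begin{align*}
\mathcal F(t):=\Im\int_{\R^3}\widetilde\eta(t,x)\,\overline{Z(\mathbf{z}(t))(x)}\,dx,
\end{align*}
which is well defined by the exponential decay of $G_{\mathbf{m}}\in\Sigma^\infty$ and satisfies $|\mathcal F(t)|\lesssim \|\widetilde\eta\|_{L^\infty H^1}\sum_{\mathbf{m}}|\mathbf{z}^{\mathbf{m}}(t)|\,\|G_{\mathbf{m}}\|_{\Sigma^0}\lesssim \epsilon_0^2$ uniformly in $t$. Next, differentiate $\mathcal F$ in $t$ using \eqref{eq:tildeeta} and \eqref{eq:Z}; the two $H$-terms cancel by self-adjointness of $H$, and substituting $\widetilde\eta=\xi-Z(\mathbf{z})$ inside the resulting coupling with $\sum_{\mathbf{m}}\mathbf{z}^{\mathbf{m}}P_cG_{\mathbf{m}}$ unfolds the bilinear form
\begin{align*}
Q(t):=\sum_{\mathbf{m},\mathbf{n}\in\mathbf{R}_{\mathrm{min}}}\mathbf{z}^{\mathbf{n}}\overline{\mathbf{z}^{\mathbf{m}}}\,\Im\int_{\R^3}R_+(\mathbf{n}\cdot\boldsymbol{\omega})P_cG_{\mathbf{n}}\cdot G_{\mathbf{m}}\,dx,
\end{align*}
whose diagonal $\mathbf{m}=\mathbf{n}$ contribution is exactly $-\sum_{\mathbf{m}}\gamma_{\mathbf{m}}|\mathbf{z}^{\mathbf{m}}|^2$.

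For the off-diagonal pieces of $Q(t)$ I would use non-resonance: since $(\mathbf{n}-\mathbf{m})\cdot\boldsymbol{\omega}\neq 0$ by~\eqref{eq:linind}, a single integration by parts in time (using Lemma~\ref{lem:zpres} to compute $\partial_t \mathbf{z}^{\mathbf{m}}$) converts them to boundary terms of size $\lesssim\epsilon_0^2$ plus higher-order remainders. The remaining contributions to $\int_I\partial_t\mathcal F\,dt$ come from the forcing terms in \eqref{eq:tildeeta} and \eqref{eq:Z}; they are bounded using Lemma~\ref{lem:estF} for $F(\mathbf{z},\eta)$, estimate~\eqref{est:R} for $\mathcal R(\mathbf{z})$, Lemma~\ref{lem:zpres} together with $|(D_{\mathbf z}\mathbf{z}^{\mathbf{m}})(\im\boldsymbol{\omega}\mathbf{z}-\im\boldsymbol{\varpi}(|\mathbf{z}|^2)\mathbf{z})|\lesssim|\mathbf{z}|^2|\mathbf{z}^{\mathbf{m}}|$ for $\mathcal R_Z$, the pointwise bound $|(H[\mathbf{z}]-H)\eta|\lesssim|\phi(\mathbf{z})|^2|\eta|$, and Lemma~\ref{lem:estxi} for the $\xi$-contribution. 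Collecting and using $|\mathcal F(T)-\mathcal F(0)|\lesssim\epsilon_0^2$ yields
\begin{align*}
\sum_{\mathbf{m}\in\mathbf{R}_{\mathrm{min}}}\gamma_{\mathbf{m}}\|\mathbf{z}^{\mathbf{m}}\|_{L^2(I)}^2\lesssim \epsilon_0^2+(\epsilon_0+C_0\epsilon_0^3)\sum_{\mathbf{m}}\|\mathbf{z}^{\mathbf{m}}\|_{L^2(I)}+C(C_0)\epsilon_0^4,
\end{align*}
where the middle summand comes from the $\xi$-term and the off-diagonal boundary terms. Cauchy--Schwarz and positivity of $\min_{\mathbf{m}}\gamma_{\mathbf{m}}$ then absorb the linear factor into the left-hand side, producing \eqref{FGReq}.

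The main obstacle is the off-diagonal integration by parts: each $\partial_t\mathbf{z}^{\mathbf{m}}$ reintroduces, by Lemma~\ref{lem:zpres}, a sum of the form $\sum_{\mathbf{n}'}\mathbf{z}^{\mathbf{n}'}\langle G_{\mathbf{n}'},\phi_j\rangle+r_j$, so the iteration feeds back into the quantity being estimated and only closes by exploiting $\epsilon_0\ll 1$. A secondary issue is that $R_+(\mathbf{m}\cdot\boldsymbol{\omega})P_cG_{\mathbf{m}}$ is not in $L^2$ but only in $\Sigma^{0-}$, so every spatial pairing must be performed in the $\Sigma^{0-}/\Sigma^0$ duality, legitimized by the exponential decay of $G_{\mathbf{m}}\in\Sigma^\infty$.
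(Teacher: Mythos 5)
Your plan is structured like the paper's (extract a sign--definite quadratic form in $\mathbf{z}^{\mathbf{m}}$ from the time derivative of a bounded functional, kill the off--diagonal terms by non--resonance, and absorb the rest via Lemma \ref{lem:estxi} and Cauchy--Schwarz), but the functional you chose does not produce the quadratic form you claim. Write \eqref{eq:tildeeta} as $\im\partial_t\widetilde\eta=H\widetilde\eta+N$ and \eqref{eq:Z} as $-\im\partial_t Z+HZ=W+\mathcal{R}_Z$, where $W:=\sum_{\mathbf{m}\in\mathbf{R}_{\mathrm{min}}}\mathbf{z}^{\mathbf{m}}P_cG_{\mathbf{m}}$ occurs both inside $N$ and as the source of $Z$. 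Then
\begin{align*}
\frac{d}{dt}\,\Im\int_{\R^3}\widetilde\eta\,\overline{Z}\,dx=-\Re\int_{\R^3}N\,\overline{Z}\,dx-\Re\int_{\R^3}\widetilde\eta\,\overline{W+\mathcal{R}_Z}\,dx,
\end{align*}
the $H$--terms having cancelled by self--adjointness. The two couplings of $W$ contribute $-\Re\int W\overline{Z}$ (from $N$) and, after $\widetilde\eta=\xi-Z$, the piece $+\Re\int Z\overline{W}$; since $\Re\int Z\overline{W}=\Re\int W\overline{Z}$, these cancel \emph{exactly}, whatever $Z$ is. This is the generic feature of such interaction functionals: if $u,v$ solve Schr\"odinger equations with the same source $f$, the source contributions to $\partial_t\Im\int u\bar v$ are $-\Re\int f\bar v-\Re\int u\bar f$, which vanish when $u\approx-v$. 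So the diagonal term ``exactly $-\sum\gamma_{\mathbf{m}}|\mathbf{z}^{\mathbf{m}}|^2$'' is not there; all that survives is $-\Re\int\xi\overline{W}$, an error term. (Separately, with the paper's convention $R_+(\lambda)=(H-\lambda-\im 0)^{-1}=\mathrm{P.V.}(H-\lambda)^{-1}+\im\pi\delta(H-\lambda)$, your Plemelj identity has the wrong sign: $\Im\int R_+(\lambda)P_cf\cdot \bar f\,dx$ is a \emph{positive} multiple of $\int_{|k|^2=\lambda}|\widehat f(k)|^2dS$.)

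The paper extracts the coercive term from a different place: it pairs the $\eta$--equation \eqref{eq:modnls} with the discrete direction $D_{\mathbf{z}}\phi(\mathbf{z})\,\im\boldsymbol{\varpi}(|\mathbf{z}|^2)\mathbf{z}$, yielding the Lyapunov identity \eqref{eq:dtE1} for $E(\phi(\mathbf{z}))-A_1(\mathbf{z})-A_2(\mathbf{z})$. The decisive term there is \emph{linear} in $\eta$, namely $\sum_{\mathbf{m}}\mathbf{m}\cdot\boldsymbol{\omega}\<\eta,\im\,\mathbf{z}^{\mathbf{m}}G_{\mathbf{m}}\>$ (coming from $\<\eta,(D_{\mathbf{z}}(\mathbf{z}^{\mathbf{m}})\widetilde{\mathbf{z}})G_{\mathbf{m}}\>$ via \eqref{eq:difzm}); only upon substituting $\eta=R[\mathbf{z}]\xi-(R[\mathbf{z}]-1)Z-Z$ into \emph{that} term does one obtain the diagonal $-\sum_{\mathbf{m}}\mathbf{m}\cdot\boldsymbol{\omega}|\mathbf{z}^{\mathbf{m}}|^2\<R_+(\mathbf{m}\cdot\boldsymbol{\omega})P_cG_{\mathbf{m}},\im G_{\mathbf{m}}\>$, sign--definite by Assumption \ref{ass:FGR} because the extra $\im$ in the pairing picks out the imaginary (delta--measure) part of the resolvent rather than its principal value. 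Your treatment of the off--diagonal terms and of the remainders mirrors the paper's $A_1,A_2$ and $R_1,\dots,R_7$, and the final absorption step is the same; but without a correct source for the quadratic term the argument does not close. If you want to stay with a radiation--only functional, note that $\Re\int\widetilde\eta\overline{Z}$ (rather than $\Im$) avoids the cancellation — the two source couplings then add up to $2\Im\int W\overline{Z}$, whose diagonal is a positive multiple of $\sum_{\mathbf{m}}|\mathbf{z}^{\mathbf{m}}|^2\int_{|k|^2=\mathbf{m}\cdot\boldsymbol{\omega}}|\widehat{G_{\mathbf{m}}}(k)|^2dS$ — but then every error term must be re--estimated for that functional, which neither you nor the paper has done.
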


\begin{proof}
We substitute $\widetilde{\mathbf{z}}=\im \boldsymbol{\varpi}(|\mathbf{z}|^2)\mathbf{z}$ into \eqref{eq:discfund} and we make various simplifications.
The first, by  $\<f,\im f\>=0$, is
\begin{align}\label{eq:lFGR2}
\<\im D_{\mathbf{z}}\phi(\mathbf{z})(\partial_t \mathbf{z}+\im \boldsymbol{\varpi}(|\mathbf{z}|^2)\mathbf{z}),D_{\mathbf{z}}\phi(\mathbf{z})\im \boldsymbol{\varpi}(|\mathbf{z}|^2)\mathbf{z}\>=\<\im D_{\mathbf{z}}\phi(\mathbf{z})(\partial_t \mathbf{z}),D_{\mathbf{z}}\phi(\mathbf{z})\im \boldsymbol{\varpi}(|\mathbf{z}|^2)\mathbf{z}\>.
\end{align}
Next,
\begin{align}\nonumber
\<\sum_{\mathbf{m}\in \mathbf{R}_{\mathrm{min}}}\mathbf{z}^{\mathbf{m}}G_{\mathbf{m}}+\mathcal{R}(\mathbf{z}),D_{\mathbf{z}}\phi(\mathbf{z})\im \boldsymbol{\varpi}(|\mathbf{z}|^2)\mathbf{z}\>=&
\<\sum_{\mathbf{m}\in \mathbf{R}_{\mathrm{min}}}\mathbf{z}^{\mathbf{m}}G_{\mathbf{m}}+\mathcal{R}(\mathbf{z}),D_{\mathbf{z}}\phi(\mathbf{z})\(\partial_t \mathbf{z}+\im \boldsymbol{\varpi}(|\mathbf{z}|^2)\mathbf{z}\)\>
\\&-\<\sum_{\mathbf{m}\in \mathbf{R}_{\mathrm{min}}}\mathbf{z}^{\mathbf{m}}G_{\mathbf{m}}+\mathcal{R}(\mathbf{z}),D_{\mathbf{z}}\phi(\mathbf{z})\partial_t \mathbf{z}\>.\label{eq:lFGR1}
\end{align}
The 1st term of the r.h.s.\ of \eqref{eq:lFGR1} can be written as
\begin{align}%
 %&\<\sum_{\mathbf{m}\in \mathbf{R}_{\mathrm{min}}}\mathbf{z}^{\mathbf{m}}G_{\mathbf{m}}+\mathcal{R}(\mathbf{z}),D_{\mathbf{z}}\phi(\mathbf{z})\(\partial_t \mathbf{z}+\im \boldsymbol{\varpi}(|\mathbf{z}|^2)\mathbf{z}\)\>=\\
 &\label{eq:lFGR7} \<\sum_{\mathbf{m}\in \mathbf{R}_{\mathrm{min}}}\mathbf{z}^{\mathbf{m}}G_{\mathbf{m}},D_{\mathbf{z}}\phi(0)\(\partial_t \mathbf{z}+\im \boldsymbol{\varpi}(|\mathbf{z}|^2)\mathbf{z}\)\>+R_1(\mathbf{z}),
\end{align}
where
\begin{align*}
R_1(\mathbf{z})=&\<\sum_{\mathbf{m}\in \mathbf{R}_{\mathrm{min}}}\mathbf{z}^{\mathbf{m}}G_{\mathbf{m}},\(D_{\mathbf{z}}\phi(\mathbf{z})-D_\mathbf{z}\phi(0)\)\(\partial_t \mathbf{z}+\im \boldsymbol{\varpi}(|\mathbf{z}|^2)\mathbf{z}\)\>\\&+\<\mathcal{R}(\mathbf{z}),D_{\mathbf{z}}\phi(\mathbf{z})\(\partial_t \mathbf{z}+\im \boldsymbol{\varpi}(|\mathbf{z}|^2)\mathbf{z}\)\>,\nonumber
\end{align*}
satisfies
\begin{align}\label{eq:lFGR4}
\int_0^T |R_1(\mathbf{z}(t))|\,dt\lesssim C_0^2\epsilon_0^4.
\end{align}
Using  the stationary Refined Profile equation \eqref{eq:rfstationary}, the last line of \eqref{eq:lFGR1} can be written as
\begin{align}\label{eq:lFGR5}
%-\<\sum_{\mathbf{m}\in \mathbf{R}_{\mathrm{min}}}\mathbf{z}^{\mathbf{m}}G_{\mathbf{m}}+\mathcal{R}(\mathbf{z}),D_{\mathbf{z}}\phi(\mathbf{z})\partial_t \mathbf{z}\>=
-\<H\phi(\mathbf{z})+g(|\phi(\mathbf{z})|^2)\phi(\mathbf{z}),D_{\mathbf{z}}\phi(\mathbf{z})\partial_t \mathbf{z}\>
+\<D_{\mathbf{z}}\phi(\mathbf{z})(\im \boldsymbol{\varpi}(|\mathbf{z}|^2))\mathbf{z} ,\im  D_{\mathbf{z}}\phi(\mathbf{z})\partial_t \mathbf{z}\>.%\nonumber
\end{align}
Notice that we have
\begin{align}\label{eq:lFGR6}
\<H\phi(\mathbf{z})+g(|\phi(\mathbf{z})|^2)\phi(\mathbf{z}),D_{\mathbf{z}}\phi(\mathbf{z})\partial_t \mathbf{z}\>=\frac{d}{dt}E(\phi(\mathbf{z})),
\end{align}
and the right hand side  of \eqref{eq:lFGR2}, i.e. the left hand side of \eqref{eq:discfund},  cancels with the 2nd line  of\eqref{eq:lFGR5}.
Therefore, from \eqref{eq:discfund} with $\widetilde{\mathbf{z}}=\im \boldsymbol{\varpi}(|\mathbf{z}|^2)\mathbf{z}$, \eqref{eq:lFGR2}, \eqref{eq:lFGR1}, \eqref{eq:lFGR7}, \eqref{eq:lFGR5} and \eqref{eq:lFGR6}, we have
\begin{align}\label{eq:dtE1}
\frac{d}{dt}E(\phi(\mathbf{z}))-\sum_{\mathbf{m}\in \mathbf{R}_{\mathrm{min}}}\mathbf{m}\cdot \boldsymbol{\omega}\<\eta,\im  \mathbf{z}^{\mathbf{m}}G_{\mathbf{m}}\>=\sum_{\mathbf{m}\in \mathbf{R}_{\mathrm{min}}}\<\mathbf{z}^{\mathbf{m}}G_{\mathbf{m}},D_{\mathbf{z}}\phi(0)\(\partial_t \mathbf{z}+\im \boldsymbol{\varpi}(|\mathbf{z}|^2)\mathbf{z}\)\>+R_2(\mathbf{z},\eta),
\end{align}
where
\begin{align}\label{eq:R2}
R_2(\mathbf{z},\eta)=&R_1(\mathbf{z})+\<\im \eta, D_{\mathbf{z}}^2\phi(\mathbf{z})\(\partial_t \mathbf{z}+\im \boldsymbol{\varpi}(|\mathbf{z}|^2)\mathbf{z},\im \boldsymbol{\varpi}(|\mathbf{z}|^2)\mathbf{z}\)\>
+\<\eta,D_{\mathbf{z}}\mathcal{R}(\mathbf{z})\im \boldsymbol{\varpi}(|\mathbf{z}|^2)\mathbf{z}\>\\&
+\sum_{\mathbf{m}\in \mathbf{R}_{\mathrm{min}}}(\boldsymbol{\varpi}(|\mathbf{z}|^2)-\boldsymbol{\omega})\<\eta,\mathbf{z}^{\mathbf{m}}G_{\mathbf{m}}\>+\<F(\mathbf{z},\eta),D_{\mathbf{z}}\phi(\mathbf{z})\im \boldsymbol{\varpi}(|\mathbf{z}|^2)\mathbf{z}\>,\nonumber
\end{align}
satisfies
\begin{align}\label{eq:estR2}
\int_0^T|R_2(\mathbf{z}(t),\eta(t))|\,dt\lesssim \(C_0^2\epsilon_0^2+C_0^5\epsilon_0^5\)\epsilon_0^2.
\end{align}
By Lemma \ref{lem:zpres} and $D_{\mathbf{z}}\phi(0)\widetilde{\mathbf{z}}=\widetilde{\mathbf{z}}\cdot \boldsymbol{\phi}$, the 1st term of right hand side  of \eqref{eq:dtE1} can be written as
\begin{align}
&\sum_{\mathbf{m}\in \mathbf{R}_{\mathrm{min}}}\<\mathbf{z}^{\mathbf{m}}G_{\mathbf{m}},\boldsymbol{\phi}\cdot\(\partial_t \mathbf{z}+\im \boldsymbol{\varpi}(|\mathbf{z}|^2)\mathbf{z}\)\>
=\sum_{\mathbf{m},\mathbf{n}\in \mathbf{R}_{\mathrm{min}}}\sum_{j=1}^N\<\mathbf{z}^{\mathbf{m}}G_{\mathbf{m}}, \phi_j\(-\im \mathbf{z}^{\mathbf{n}}g_{\mathbf{n},j}+r_j(\mathbf{z},\eta)\)\>\nonumber\\&
=\sum_{\substack{\mathbf{m},\mathbf{n}\in \mathbf{R}_{\mathrm{min}}\\ \mathbf{m}\neq \mathbf{n}}}\sum_{j=1}^N\mathrm{Re}\(\im \mathbf{z}^{\mathbf{m}}\overline{\mathbf{z}^{\mathbf{n}}}\)g_{\mathbf{m},j}g_{\mathbf{n},j}
+\sum_{\mathbf{m},\mathbf{n}\in \mathbf{R}_{\mathrm{min}}}\sum_{j=1}^N\<\mathbf{z}^{\mathbf{m}}G_{\mathbf{m}}, r_j(\mathbf{z},\eta)\phi_j\>,
\end{align}
where we have set $g_{\mathbf{m},j}:=\<G_{\mathbf{m}},\phi_j\>$ and used the fact that $\<\mathbf{z}^{\mathbf{m}}G_{\mathbf{m}},-\im \mathbf{z}^{\mathbf{m}}\phi_j\>=0$ due to  $G_{\mathbf{m}}$ and $\phi_j$ being $\R$ valued.
Now, for $\mathbf{m}\neq \mathbf{n}$, we have
\begin{align*}
\partial_t(\mathbf{z}^{\mathbf{n}}\overline{\mathbf{z}^{\mathbf{m}}})=&\im (\mathbf{m}-\mathbf{n})\cdot \boldsymbol{\omega} \mathbf{z}^{\mathbf{n}}\overline{\mathbf{z}^{\mathbf{m}}}+\im (\mathbf{m}-\mathbf{n})\cdot \(\boldsymbol{\varpi}(|\mathbf{z}|^2)-\boldsymbol{\omega}\) \mathbf{z}^{\mathbf{n}}\overline{\mathbf{z}^{\mathbf{m}}}\\&+D_{\mathbf{z}}(\mathbf{z}^{\mathbf{n}})(\partial_t \mathbf{z}+\im \boldsymbol{\varpi}(|\mathbf{z}|^2\mathbf{z}))\overline{\mathbf{z}^{\mathbf{m}}}+
\mathbf{z}^{\mathbf{n}}\overline{D_{\mathbf{z}}(\mathbf{z}^{\mathbf{m}})((\partial_t \mathbf{z}+\im \boldsymbol{\varpi}(|\mathbf{z}|^2\mathbf{z})))}
\end{align*}
Thus, since
 $(\mathbf{m-n})\cdot \boldsymbol{\omega}\neq 0$ from Assumption \ref{ass:linearInd}, we have
	\begin{align}\label{znmnormal}
	\mathbf{z}^{\mathbf{n}}\overline{\mathbf{z}^{\mathbf{m}}}=\frac{1}{\im ((\mathbf{m}-\mathbf{n})\cdot \boldsymbol{\omega})}\partial_t(\mathbf{z}^{\mathbf{n}}\overline{\mathbf{z}^{\mathbf{m}}})
	+r_{\mathbf{n},\mathbf{m}}(\mathbf{z}),
	\end{align}
	where
	\begin{align*}
	r_{\mathbf{n},\mathbf{m}}(\mathbf{z})=&
	-\frac{(\mathbf{m}-\mathbf{n})\cdot \(\boldsymbol{\varpi}(|\mathbf{z}|^2)-\boldsymbol{\omega}\)}{ (\mathbf{m}-\mathbf{n})\cdot \boldsymbol{\omega}}  \mathbf{z}^{\mathbf{n}}\overline{\mathbf{z}^{\mathbf{m}}}
	\\&+\frac{\im}{ (\mathbf{m}-\mathbf{n})\cdot \boldsymbol{\omega}}\(D_{\mathbf{z}}(\mathbf{z}^{\mathbf{n}})(\partial_t \mathbf{z}+\im \boldsymbol{\varpi}(|\mathbf{z}|^2\mathbf{z}))\overline{\mathbf{z}^{\mathbf{m}}}+
		\mathbf{z}^{\mathbf{n}}\overline{D_{\mathbf{z}}(\mathbf{z}^{\mathbf{m}})((\partial_t \mathbf{z}+\im \boldsymbol{\varpi}(|\mathbf{z}|^2\mathbf{z})))}\).
		\end{align*}
Then, by the hypotheses of Proposition \ref{prop:mainbounds} we have
\begin{align}\label{est:rmn}
\int_0^T|r_{\mathbf{m},\mathbf{n}}(\mathbf{z})|\,dt\lesssim C_0^2\epsilon_0^4.
\end{align}
Thus, we have
\begin{align*}
\sum_{\substack{\mathbf{m},\mathbf{n}\in \mathbf{R}_{\mathrm{min}}\\ \mathbf{m}\neq \mathbf{n}}}\sum_{j=1}^N\mathrm{Re}\(\im \mathbf{z}^{\mathbf{m}}\overline{\mathbf{z}^{\mathbf{n}}}\)g_{\mathbf{m},j}g_{\mathbf{n},j}=
\partial_tA_1(\mathbf{z})+R_3(\mathbf{z}),
\end{align*}
where
\begin{align*}
A_1(\mathbf{z})&=
\sum_{\substack{\mathbf{m},\mathbf{n}\in \mathbf{R}_{\mathrm{min}}\\ \mathbf{m}\neq \mathbf{n}}}\sum_{j=1}^N
\frac{1}{
\(\mathbf{n}-\mathbf{m}\)\cdot \boldsymbol{\omega}
}
\mathrm{Re}(\mathbf{z}^{\mathbf{m}}\overline{\mathbf{z}^{\mathbf{n}}})
g_{\mathbf{m},j}g_{\mathbf{n},j},\ \text{and}\\
R_3(\mathbf{z})&=\sum_{\substack{\mathbf{m},\mathbf{n}\in \mathbf{R}_{\mathrm{min}}\\ \mathbf{m}\neq \mathbf{n}}}\sum_{j=1}^N\mathrm{Re}\(\im r_{\mathbf{n},\mathbf{m}}(\mathbf{z})\)g_{\mathbf{m},j}g_{\mathbf{n},j}.
\end{align*}
Thus,
\begin{align*}
\sum_{\mathbf{m}\in \mathbf{R}_{\mathrm{min}}}\<\mathbf{z}^{\mathbf{m}}G_{\mathbf{m}},\boldsymbol{\phi}\cdot\(\partial_t \mathbf{z}+\im \boldsymbol{\varpi}(|\mathbf{z}|^2)\mathbf{z}\)\>
=\partial_t A_1(\mathbf{z})+R_4(\mathbf{z},\eta),
\end{align*}
where
\begin{align*}
R_4(\mathbf{z},\eta)=R_3(\mathbf{z})+\sum_{\mathbf{m},\mathbf{n}\in \mathbf{R}_{\mathrm{min}}}\sum_{j=1}^N\<\mathbf{z}^{\mathbf{m}}G_{\mathbf{m}}, r_j(\mathbf{z},\eta)\phi_j\>.
\end{align*}
By \eqref{est:rmn} and Lemma \ref{lem:zpres}, we have
\begin{align*}
\int_0^T|R_4(\mathbf{z}(t),\eta(t))|\,dt\lesssim C_0^2\epsilon_0^4.
\end{align*}
Substituting $\eta=R[\mathbf{z}]\xi -(R[\mathbf{z}]-1)Z(\mathbf{z})-Z(\mathbf{z})$ into the 2nd term of the l.h.s.\ of \eqref{eq:dtE1}, we have
\begin{align}\label{eq:lFGR8}
&\sum_{\mathbf{m}\in \mathbf{R}_{\mathrm{min}}}\mathbf{m}\cdot\boldsymbol{\omega}\<\eta,\im\mathbf{z}^{\mathbf{m}} G_{\mathbf{m}}\>
=-\sum_{\mathbf{m}\in \mathbf{R}_{\mathrm{min}}}\mathbf{m}\cdot \boldsymbol{\omega} |\mathbf{z}^{\mathbf{m}}|^2
\< R_+(\mathbf{m}\cdot \boldsymbol{\omega})P_c G_{\mathbf{m}},\im  G_{\mathbf{m}}\>\\& -
\sum_{\substack{\mathbf{m},\mathbf{n}\in \mathbf{R}_{\mathrm{min}}\\ \mathbf{m}\neq \mathbf{n}}}\mathbf{m}\cdot \boldsymbol{\omega}\< \mathbf{z}^{\mathbf{n}}R_+(\mathbf{n}\cdot \boldsymbol{\omega})P_c G_{\mathbf{n}},\im  \mathbf{z}^{\mathbf{m}}G_{\mathbf{m}}\>
+\sum_{\mathbf{m}\in \mathbf{R}_{\mathrm{min}}}\mathbf{m}\cdot \boldsymbol{\omega}\<R[\mathbf{z}]\xi-(R[\mathbf{z}]-1)Z(\mathbf{z}),
 \im \mathbf{z}^{\mathbf{m}}G_{\mathbf{m}}\>.\nonumber
\end{align}
By \eqref{znmnormal}, the 2nd term of the r.h.s.\ of \eqref{eq:lFGR8} can be written as
\begin{align*}
-\sum_{\substack{\mathbf{m},\mathbf{n}\in \mathbf{R}_{\mathrm{min}}\\ \mathbf{m}\neq \mathbf{n}}}\mathbf{m}\cdot \boldsymbol{\omega}\< \mathbf{z}^{\mathbf{n}}R_+(\mathbf{n}\cdot \boldsymbol{\omega})P_c G_{\mathbf{n}},\im  \mathbf{z}^{\mathbf{m}}G_{\mathbf{m}}\>
=\partial_tA_2(\mathbf{z})+R_5(\mathbf{z}),
\end{align*}
where
\begin{align*}
A_2(\mathbf{z})&=-\mathrm{Re}\sum_{\substack{\mathbf{m},\mathbf{n}\in \mathbf{R}_{\mathrm{min}}\\ \mathbf{m}\neq \mathbf{n}}}
\frac{\mathbf{m}\cdot \boldsymbol{\omega}}{\im \(\mathbf{m}-\mathbf{n}\)\cdot \boldsymbol{\omega}}\mathbf{z}^{\mathbf{n}}\overline{\mathbf{z}^{\mathbf{m}}}\< R_+(\mathbf{n}\cdot \boldsymbol{\omega})P_c G_{\mathbf{n}},\im  G_{\mathbf{m}}\>,\\
R_5(\mathbf{z})&=-\sum_{\substack{\mathbf{m},\mathbf{n}\in \mathbf{R}_{\mathrm{min}}\\ \mathbf{m}\neq \mathbf{n}}}\mathbf{m}\cdot \boldsymbol{\omega}\< r_{\mathbf{n},\mathbf{m}}(\mathbf{z})R_+(\mathbf{n}\cdot \boldsymbol{\omega})P_c G_{\mathbf{n}},\im  G_{\mathbf{m}}\>,
\end{align*}
with
\begin{align*}
\int_0^T |R_5(\mathbf{z}(t))|\,dt\lesssim C_0^2\epsilon_0^4.
\end{align*}
The last term of r.h.s.\ of \eqref{eq:lFGR8} can be written as
\begin{align*}
\sum_{\mathbf{m}\in \mathbf{R}_{\mathrm{min}}}\mathbf{m}\cdot \boldsymbol{\omega}\<R[z]\xi,
 \im \mathbf{z}^{\mathbf{m}}G_{\mathbf{m}}\>+R_6(\mathbf{z}),
\end{align*}
with $R_6(\mathbf{z})$ satisfying
\begin{align*}
\int_0^T |R_6(\mathbf{z}(t))|\,dt\lesssim C_0^2\epsilon_0^4.
\end{align*}
Therefore, we have
\begin{align}\label{eq:lFGR9}
\frac{d}{dt}\(E(\phi(\mathbf{z}))-A_1(\mathbf{z})-A_2(\mathbf{z})\)
=&-\sum_{\mathbf{m}\in \mathbf{R}_{\mathrm{min}}}\mathbf{m}\cdot \boldsymbol{\omega} |\mathbf{z}^{\mathbf{m}}|^2\<R_+(\mathbf{m}\cdot \boldsymbol{\omega})P_c G_{\mathbf{m}},\im  P_cG_{\mathbf{m}}\>
\\& +\sum_{\mathbf{m}\in \mathbf{R}_{\mathrm{min}}}\mathbf{m}\cdot \boldsymbol{\omega}\<R[\mathbf{z}]\xi,\im \mathbf{z}^{\mathbf{m}}G_{\mathbf{m}}\>+R_7(\mathbf{z},\eta)\nonumber
\end{align}
where $R_7(\mathbf{z},\eta)=R_2(\mathbf{z})+R_4(\mathbf{z})+R_5+R_6$.

Now, by $R_+(\boldsymbol{\omega}\cdot \mathbf{m})
=\mathrm{P.V.}\frac{1}{H-\boldsymbol{\omega}\cdot \mathbf{m}}+\im \pi \delta(H-\boldsymbol{\omega}\cdot \mathbf{m})
$  and formula (2.5) p. 156 \cite{taylor2}      and Assumption \ref{ass:FGR}, we have
\begin{align*}
\< \im G_{\mathbf{m}},   (H-\boldsymbol{\omega}\cdot \mathbf{m} -\im 0)^{-1} G_{\mathbf{m}} \> =     \frac{1}{16 \pi \sqrt{\boldsymbol{\omega}\cdot \mathbf{m}}} \int_{|k|^2=\boldsymbol{\omega}\cdot \mathbf{m}}|\widehat{G_{\mathbf{m}}}(k)|\,dS(k)\gtrsim 1,
\end{align*}
with $\widehat{G_{\mathbf{m}}}(k)$   like in Assumption \ref{ass:FGR}.
Thus, we have
\begin{align*}
\|\mathbf{z}^{\mathbf{m}}\|_{L^2(I)}^2 \lesssim \epsilon_0^2+\delta^{-1} \|\xi\|_{L^2\Sigma^{0-}(I)}^2 + \delta \|\mathbf{z}^{\mathbf{m}}\|_{L^2(I)}^2+C_0^2\epsilon_0^4,
\end{align*}
where we have used Schwartz inequality. Taking $\delta$ so that the $\|\mathbf{z}^{\mathbf{m}}\|_{L^2(I)}^2 \lesssim \epsilon_0^2+\delta^{-1} \|\xi\|_{L^2\Sigma^{0-}(I)}^2  +C_0^2\epsilon_0^4$ and using
$\|\xi\|_{L^2\Sigma^{0-}(I)} \lesssim \epsilon_0$  by  Lemma \ref{lem:estxi},  we obtain   \eqref{FGReq}.

\end{proof}

\section*{Acknowledgments}
C. was supported by a FIRB of the University of Trieste.
M.M. was supported by the JSPS KAKENHI Grant Number 19K03579, G19KK0066A and JP17H02853.

Department of Mathematics and Geosciences,  University
of Trieste, via Valerio  12/1  Trieste, 34127  Italy.
{\it E-mail Address}: {\tt scuccagna@units.it}

Department of Mathematics and Informatics,
Graduate School of Science,
Chiba University,
Chiba 263-8522, Japan.
{\it E-mail Address}: {\tt maeda@math.s.chiba-u.ac.jp}

\end{document}